\documentclass[12pt]{amsart}
\pdfoutput=1
\usepackage[paperheight=11in,paperwidth=8.5in,left=1in,top=1.25in,right=1in,bottom=1.25in]{geometry}
\usepackage[pdfstartview={FitH},pdftitle={},pdfauthor={},pdfsubject={},pdfcreator={},pdfproducer={},pdfkeywords={},pagebackref=false,colorlinks=true,linkcolor=black,linktoc=all,citecolor=black,filecolor=black,urlcolor=black]{hyperref}

\usepackage{amsmath,amssymb}
\usepackage[capitalize,nameinlink,noabbrev,nosort]{cleveref}
\usepackage[justification=centering]{caption}
\usepackage{enumerate}
\usepackage{enumitem}
\usepackage{tikz}\pdfpageattr{/Group <</S /Transparency /I true /CS /DeviceRGB>>}
\usepackage{todonotes}
\allowdisplaybreaks

\newtheorem{theorem}{Theorem}[section]
\newtheorem{lemma}[theorem]{Lemma}
\newtheorem{proposition}[theorem]{Proposition}
\newtheorem{conjecture}[theorem]{Conjecture}
\newtheorem{corollary}[theorem]{Corollary}

\newtheorem*{theorem*}{Theorem}

\theoremstyle{definition}
\newtheorem{definition}[theorem]{Definition}
\newtheorem{example}[theorem]{Example}

\newtheorem{remark}[theorem]{Remark}

\numberwithin{equation}{section}

\newcommand{\R}{\mathbb{R}}

\newcommand{\bool}[1]{B_{#1}}
\newcommand{\eulerian}{\genfrac{\langle}{\rangle}{0pt}{}}
\DeclareMathOperator{\Gr}{Gr}
\newcommand{\rankfun}[1]{\textnormal{rank}(#1)}
\newcommand{\sym}[1]{\mathfrak{S}_{#1}}

\begin{document}

\title{Shelling the $m=1$ amplituhedron}

\author{Steven N. Karp}
\address{Department of Mathematics, University of Notre Dame}
\email{\href{mailto:skarp2@nd.edu}{skarp2@nd.edu}}
\author{John Machacek}
\address{Department of Mathematics, University of Oregon}
\email{\href{mailto:johnmach@uoregon.edu}{johnmach@uoregon.edu}}

\subjclass[2020]{06A07, 14M15, 81T60, 05A19}
\thanks{S.N.K.\ was partially supported by an NSERC postdoctoral fellowship.}

\begin{abstract}
\hspace*{4pt} The amplituhedron $\mathcal{A}_{n,k,m}$ was introduced by Arkani-Hamed and Trnka \cite{arkani-hamed_trnka14} in order to give a geometric basis for calculating scattering amplitudes in planar $\mathcal{N}=4$ supersymmetric Yang--Mills theory. It is a projection inside the Grassmannian $\Gr_{k,k+m}$ of the totally nonnegative part of $\Gr_{k,n}$. Karp and Williams \cite{karp_williams19} studied the $m=1$ amplituhedron $\mathcal{A}_{n,k,1}$, giving a regular CW decomposition of it. Its face poset $R_{n,l}$ (with $l := n-k-1$) consists of all projective sign vectors of length $n$ with exactly $l$ sign changes. We show that $R_{n,l}$ is EL-shellable, resolving a problem posed in \cite{karp_williams19}. This gives a new proof that $\mathcal{A}_{n,k,1}$ is homeomorphic to a closed ball, which was originally proved in \cite{karp_williams19}. We also give explicit formulas for the $f$-vector and $h$-vector of $R_{n,l}$, and show that it is rank-log-concave and strongly Sperner. Finally, we consider a related poset $P_{n,l}$ introduced by Machacek \cite{machacek}, consisting of all projective sign vectors of length $n$ with at most $l$ sign changes. We show that it is rank-log-concave, and conjecture that it is Sperner.
\end{abstract}

\maketitle

\section{Introduction}\label{sec:intro}

\noindent Let $\Gr_{k,n}^{\ge 0}$ denote the {\itshape totally nonnegative Grassmannian} \cite{postnikov,lusztig94}, comprised of all $k$-dimensional subspaces of $\mathbb{R}^n$ whose Pl\"{u}cker coordinates are nonnegative. Motivated by the physics of scattering amplitudes, Arkani-Hamed and Trnka \cite{arkani-hamed_trnka14} introduced a generalization of $\Gr_{k,n}^{\ge 0}$, called the {\itshape (tree) amplituhedron} and denoted $\mathcal{A}_{n,k,m}(Z)$. It is defined as the image of $\Gr_{k,n}^{\ge 0}$ under (the map induced by) a linear surjection $Z : \mathbb{R}^n \to \mathbb{R}^{k+m}$ whose $(k+m)\times (k+m)$ minors are all positive. While the definition of $\mathcal{A}_{n,k,m}(Z)$ depends on the choice of $Z$, it is expected that its geometric and combinatorial properties only depend on $n$, $k$, and $m$. The amplituhedron may be regarded as a generalization of both a cyclic polytope (which we obtain when $k=1$) and the totally nonnegative Grassmannian $\Gr_{k,n}^{\ge 0}$ (which we obtain when $k+m = n$).

When $m=4$, the amplituhedron $\mathcal{A}_{n,k,4}(Z)$ gives a geometric basis for computing tree-level scattering amplitudes in planar $\mathcal{N} = 4$ supersymmetric Yang--Mills theory, but it is an interesting mathematical object for any $m$. In \cite{karp_williams19}, Karp and Williams carried out a detailed study of the $m=1$ amplituhedron $\mathcal{A}_{n,k,1}(Z)$. They gave a regular CW decomposition of $\mathcal{A}_{n,k,1}(Z)$, whose face poset, which we denote by $R_{n,l}$ (with $l := n-k-1$), can be described as follows.\footnote{For simplicity, we use an equivalent but slightly different labeling of the face poset than in \cite{karp_williams19}. Namely, in \cite{karp_williams19}, the face poset is denoted $\overline{\mathbb{P}\!\operatorname{Sign}_{n,k,1}}$, and is obtained from $R_{n,l}$ by applying the involution $(v_1, \dots, v_n) \mapsto (v_1, -v_2, v_3, -v_4, \dots, (-1)^{n-1}v_n)$.} The elements of $R_{n,l}$ are {\itshape projective sign vectors} of length $n$ (i.e.\ elements of $\{0,+,-\}^n\setminus\{(0,\dots, 0)\}$ modulo the relation $(v_1, \dots, v_n) \sim (-v_1, \dots, -v_n)$) with exactly $l$ sign changes. The order relation in $R_{n,l}$ is such that
\begin{align}\label{sign_vector_definition}
(v_1, \dots, v_n) \le (w_1, \dots, w_n) \quad \iff \quad v_i\in\{0,w_i\} \text{ for } 1 \le i \le n.
\end{align}
For example, $R_{3,1}$ is depicted in \cref{fig:R31}.

Karp and Williams posed the problem \cite[Problem 6.19]{karp_williams19} of showing that the poset $R_{n,l}$ is shellable. We resolve this problem:
\begin{theorem}\label{thm:shelling_intro}
The poset $R_{n,l}$ with a minimum and a maximum adjoined is EL-shellable.
\end{theorem}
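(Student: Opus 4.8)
The plan is to construct an explicit EL-labeling of $\widehat R := R_{n,l}\cup\{\hat 0,\hat 1\}$ and to verify the EL conditions interval by interval. First I would fix the combinatorial anatomy of $R_{n,l}$: writing $v\in R_{n,l}$ as a sign vector with zeros, let $B_0(v),\dots,B_l(v)$ be its maximal runs of a fixed nonzero sign read from left to right, so each $B_i(v)$ is a nonempty block of positions with $\max B_i(v)<\min B_{i+1}(v)$ and with the signs of the $B_i(v)$ alternating. Then $v\le w$ in $R_{n,l}$ exactly when $B_i(v)\subseteq B_i(w)$ for every $i$; a covering relation amounts to turning a single $0$ of $v$ into a nonzero entry, i.e.\ adding one position to one of the runs; $R_{n,l}$ is graded of rank $n-l-1$ with rank function $v\mapsto\#\{\text{nonzero entries of }v\}-(l+1)$; its minimal elements are the $\binom{n}{l+1}$ alternating vectors of weight $l+1$ and its maximal elements are the $\binom{n-1}{l}$ zero-free sign vectors. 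The structural point I would prove next is that every interval $[u,w]$ with $u,w\in R_{n,l}$ is a Boolean lattice: a position in $B_i(w)\setminus B_i(u)$ can only be added to the $i$-th run, and these additions may be performed in any order, so $[u,w]\cong B_r$ with $r=\operatorname{rank}(w)-\operatorname{rank}(u)$. Hence the only intervals that require genuine work are those meeting $\hat 0$ or $\hat 1$; the delicate ones are $[\hat 0,w]$ for $w$ maximal, which is $\hat 0$ adjoined to the poset of those subsets of $[n]$ that meet every run of $w$ — equivalently the poset of spanning sets of the partition matroid whose blocks are the runs of $w$ — and this is genuinely non-Boolean as soon as $w$ has two runs of size at least $2$.

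For the labeling, a covering relation inside $R_{n,l}$ that fills position $p$ is labeled $p$ if the new entry has the same sign as the nearest nonzero entry of the source to the left of $p$ (or if there is no such entry), and $p'$ if it has the opposite sign, where the label set $\{1,2,2',3,3',\dots,n,n'\}$ is totally ordered by $1<2<2'<3<3'<\cdots$. Using the "primed" copies is forced rather than cosmetic: the two covering relations of a given $v$ that fill the same position join two different runs, and they must receive different labels, as one sees already in the length-two "diamond" intervals $[v,\hat 1]$. One checks that with this labeling each Boolean interval $[u,w]$ is EL (its unique rising maximal chain fills the missing positions in increasing order, and is lexicographically least). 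It remains to label the covering relations $\hat 0\lessdot u$ (with $u$ the alternating vector supported on a set $S$) by a symbol $\ell_S$ and $w\lessdot\hat 1$ (with $w$ maximal) by a symbol $r_w$, and — this is the heart of the construction — to interleave the families $\{\ell_S\}$, $\{r_w\}$, and $\{1,2,2',\dots\}$ into a single total order so that every interval $[\hat 0,w]$ acquires a unique rising maximal chain that is lex-least. Concretely this amounts to selecting, for each maximal $w$, one distinguished transversal of the runs of $w$ — the natural candidate is to order the $\ell_S$ by a colexicographic comparison of the supports $S$ and to slot each $\ell_S$ immediately below the label of a designated "first move out of $S$" — and the subtlety is that this selection must be consistent across all $w$ simultaneously, since a fixed support $S$ occurs as a transversal of many different $w$. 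I expect the clean way to organize this is by an induction on $n$ (deleting the last position), which splits $R_{n,l}$ into a copy of $R_{n-1,l}$ together with the cone-like part in which position $n$ lies in a singleton last run, and to let the labels on the $\hat 0$- and $\hat 1$-edges be built up along this recursion.

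With the labeling in hand, the verification has three layers: the Boolean intervals $[u,w]$ are already handled; the length-one intervals $[\hat 0,u]$ and $[w,\hat 1]$ are trivial; and the intervals $[\hat 0,w]$, $[u,\hat 1]$ and $[\hat 0,\hat 1]$ are checked directly. For $[u,\hat 1]$ one identifies the run that eventually absorbs the large positions and reduces to the Boolean case, while $[\hat 0,\hat 1]$ is obtained by splicing the analyses of $[\hat 0,w]$ and $[w,\hat 1]$. The main obstacle is squarely the interval $[\hat 0,w]$ for $w$ maximal: because it is the (non-Boolean) poset of spanning sets of a partition matroid with a bottom adjoined, \emph{every} transversal of the runs of $w$ produces a "locally rising" maximal chain, and the whole difficulty is to calibrate the symbols $\ell_S$ — consistently over all maximal $w$, and without disturbing any interval strictly inside $R_{n,l}$ or any interval $[u,\hat 1]$ — so that exactly one of these chains is rising and is lexicographically least. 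As a consistency check, the descent distribution of the maximal chains of $\widehat R$ under this labeling should reproduce the explicit $h$-vector of $R_{n,l}$ obtained elsewhere in the paper.
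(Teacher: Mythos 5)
Your structural analysis is sound (intervals inside $R_{n,l}$ are Boolean, your position-based labels with primes do give the unique rising chain and lex-minimality on those intervals, and the real work sits at the adjoined $\hat{0}$ and $\hat{1}$), but the proof stops exactly where it would have to start: you never define the labels $\ell_S$ on the edges $\hat{0}\lessdot u$ or $r_w$ on the edges $w\lessdot\hat{1}$, nor verify \ref{EL1}/\ref{EL2} on intervals containing $\hat{0}$ or $\hat{1}$; you yourself call this ``the heart of the construction'' and leave it as a hoped-for calibration/induction. So as written this is a plan, not a proof.

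Moreover, the plan as specified cannot be completed: with your interior labels ordered primarily by position ($1\prec 2\prec 2'\prec 3\prec\cdots$), no choice of labels on the $\hat{1}$-edges works, already in $\hat{R}_{3,1}$. Take $u_1=(\{1\},\{2\})$, $u_2=(\{1\},\{3\})$, $w_1=(\{1\},\{2,3\})$, $w_2=(\{1,2\},\{3\})$. Your rule labels $u_1\lessdot w_1$ by $3$, $u_2\lessdot w_1$ by $2'$, and $u_2\lessdot w_2$ by $2$. The interval $[u_1,\hat{1}]$ has a unique maximal chain, so \ref{EL1} forces $\lambda(w_1\lessdot\hat{1})\succ 3$. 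In $[u_2,\hat{1}]$, condition \ref{EL2} forces the unique increasing chain to start with the smaller label $2$, i.e.\ to pass through $w_2$; hence the chain through $w_1$ must fail to be increasing, giving $\lambda(w_1\lessdot\hat{1})\preceq 2'\prec 3$ --- a contradiction, whatever total order you interleave the symbols $r_w$ into (and note the obstruction is in the intervals $[x,\hat{1}]$, not only in the $[\hat{0},w]$ you single out; EL2 at $\hat{0}$ similarly constrains the $\ell_S$). The missing idea, which is how the paper resolves it, is to order the interior labels \emph{not} primarily by position but primarily by a type distinction: a cover is type $\alpha$ if it fills a block below its current maximum and type $\beta$ if it extends a block past its maximum; all $\alpha$-labels are placed below all atom labels $\binom{[n]}{l+1}$, which are below all $\beta$-labels, the $\beta$-labels are ordered by \emph{decreasing} block index, and every edge into $\hat{1}$ receives the single label $(\beta,l+1,n+1)$. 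Then in $[\hat{0},y]$ an increasing chain must use only $\beta$-steps after the first edge, which pins the starting atom to the blockwise minima of $y$, and in $[x,\hat{1}]$ it may use only $\alpha$-steps and last-block $\beta$-steps before the final edge, which pins the coatom to the element determined by the blockwise maxima of $x$ --- so uniqueness and lex-minimality hold in every interval with no per-coatom or per-atom calibration at all.
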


The motivation behind \cite[Problem 6.19]{karp_williams19} was the following. Karp and Williams showed that the $m=1$ amplituhedron $\mathcal{A}_{n,k,1}(Z)$ is a regular CW complex which can be identified with the bounded complex of a certain generic arrangement of $n$ hyperplanes in $\mathbb{R}^k$ (namely, a {\itshape cyclic arrangement}). It then follows from a general result of Dong \cite{dong08} that $\mathcal{A}_{n,k,1}(Z)$ is homeomorphic to a $k$-dimensional closed ball. Karp and Williams observed that rather than appealing to \cite{dong08}, one could reach the same conclusion by showing that the face poset $R_{n,l}$ is shellable, using a result of Bj\"{o}rner \cite[Proposition 4.3(c)]{bjorner84}. (This relies on the regular CW decomposition, along with the fact that every cell of codimension one is contained in the closure of at most two maximal cells.) Therefore, as a consequence of \cref{thm:shelling_intro}, we obtain a new proof that $\mathcal{A}_{n,k,1}(Z)$ is homeomorphic to a closed ball:
\begin{corollary}[{\cite[Corollary 6.18]{karp_williams19}}]
The $m=1$ amplituhedron $\mathcal{A}_{n,k,1}(Z)$ is homeomorphic to a $k$-dimensional closed ball.
\end{corollary}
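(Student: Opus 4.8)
The plan is to obtain the corollary at once by combining \cref{thm:shelling_intro} with the reduction already prepared in \cite{karp_williams19}. Recall that Karp and Williams showed there that $\mathcal{A}_{n,k,1}(Z)$ carries a regular CW decomposition whose face poset is $R_{n,l}$, realized concretely as the bounded complex of a generic (cyclic) arrangement of $n$ affine hyperplanes in $\mathbb{R}^k$; and they observed that \emph{if} $R_{n,l}$ with a minimum $\hat 0$ and a maximum $\hat 1$ adjoined is shellable, \emph{then} one may conclude that $\mathcal{A}_{n,k,1}(Z)$ is homeomorphic to a closed ball via Bj\"{o}rner's \cite[Proposition 4.3(c)]{bjorner84}.

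The argument then has two short steps. First, an EL-shelling is in particular a shelling, so \cref{thm:shelling_intro} supplies precisely the hypothesis above: $R_{n,l}$ with $\hat 0$ and $\hat 1$ adjoined is shellable. Second, this is fed into Bj\"{o}rner's machinery. The poset in question is the augmented face poset of the regular CW complex on $R_{n,l}$, so its shellability makes that complex shellable; and a shellable regular CW complex that is a pseudomanifold with nonempty boundary --- which the bounded complex of a generic arrangement is, being pure of dimension $k$ with every codimension-one cell contained in the closure of one or two top-dimensional cells and at least one contained in exactly one --- is homeomorphic to a ball of that dimension. This is the substance of \cite[Proposition 4.3(c)]{bjorner84}, and it gives the corollary.

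I do not expect a genuine obstacle: essentially all the mathematical content has been placed in \cref{thm:shelling_intro}, and the rest is a matter of quoting the correct topological fact. The only point meriting a remark is that Bj\"{o}rner's criterion yields a ball \emph{or} a sphere, and we are in the ball case precisely because the bounded complex of the cyclic arrangement has nonempty boundary --- equivalently, $R_{n,l}$ has a rank-$(k-1)$ element covered by a single maximal element rather than by two --- which is evident from the arrangement model. Hence $\mathcal{A}_{n,k,1}(Z)$ is homeomorphic to a $k$-dimensional closed ball.
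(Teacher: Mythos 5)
Your proposal is correct and follows exactly the route the paper takes: it invokes \cref{thm:shelling_intro} to supply shellability of $\hat{R}_{n,l}$, then uses the Karp--Williams identification of $\mathcal{A}_{n,k,1}(Z)$ with the bounded complex of a cyclic arrangement together with Bj\"{o}rner's \cite[Proposition 4.3(c)]{bjorner84} to conclude it is a $k$-ball. Your extra remark on why the ball (rather than sphere) case applies is consistent with the boundary structure of the bounded complex and does not change the argument.
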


We expect that for any $m\ge 1$, the amplituhedron $\mathcal{A}_{n,k,m}(Z)$ has a shellable regular CW decomposition and is homeomorphic to a closed ball, thereby generalizing the situation which holds when $n = k+m$. Indeed, in this case $\mathcal{A}_{n,k,n-k}(Z)$ is the totally nonnegative Grassmannian $\Gr_{k,n}^{\ge 0}$; Williams \cite{williams07} showed that the face poset of $\Gr_{k,n}^{\ge 0}$ is EL-shellable, and Galashin, Karp, and Lam \cite{galashin_karp_lam22b,galashin_karp_lam22} showed that $\Gr_{k,n}^{\ge 0}$ is a regular CW complex homeomorphic to a closed ball. See \cref{rem:related_work} for further discussion of related work. In the case we consider here, $m=1$, we make use of the explicit description of the face poset $R_{n,l}$ of a cell decomposition of $\mathcal{A}_{n,k,m}(Z)$. No such description is known as yet for general $m$. For work in this direction, see \cite{karp_williams_zhang20,even-zohar_lakrec_tessler} for the case $m=4$, and \cite{lukowski,bao_he,lukowski_parisi_williams} for the case $m=2$.

Another consequence of \cref{thm:shelling_intro} is that the poset $R_{n,l}$ has a nonnegative $h$-vector. In particular, by a result of Bj\"{o}rner \cite{bjorner80} and Stanley \cite{stanley72a}, $h_i$ equals the number of maximal chains of $R_{n,l}$ with exactly $i$ descents with respect to the EL-labeling of \cref{thm:shelling_intro} (see \cref{thm:labeling_to_hvector}). We give an alternative description of the $h$-vector using generating functions (see \cref{thm:FH}), which is explicit but non-positive.

We observe that when $l=0$, the poset $R_{n,l}$ is the Boolean algebra $\bool{n}$ (consisting of all subsets of $\{1, \dots, n\}$ ordered by containment) with the minimum removed. Maximal chains of $R_{n,0}$ correspond to permutations of $\{1, \dots, n\}$ with the usual notion of descent, and $h_i$ is the Eulerian number $\eulerian{n}{i}$ (see \cref{prop:eulerian_case}). Therefore the $h$-vector of $R_{n,l}$ provides a generalization of the Eulerian numbers.

Two further well-known properties of the Boolean algebra $\bool{n}$ are that its rank sizes form a {\itshape log-concave sequence} and that it is {\itshape strongly Sperner} (see e.g.\ \cite{engel97}). We show that $R_{n,l}$ also has these properties:
\begin{theorem}\label{thm:sperner_intro}
The poset $R_{n,l}$ is rank-log-concave. It also admits a normalized flow, and hence is strongly Sperner.
\end{theorem}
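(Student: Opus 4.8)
The plan is to attack the two assertions separately, but with a common combinatorial backbone: an explicit parametrization of the elements of $R_{n,l}$ by rank. A projective sign vector $v$ of length $n$ with exactly $l$ sign changes is determined, up to the global sign $v\sim -v$, by (i) the support $S\subseteq\{1,\dots,n\}$, say $|S|=r$, which is the rank of $v$ in $R_{n,l}$ (so $\rankfun v = |S|$, with ranks running from $l+1$ to $n$), and (ii) a sign pattern on the $r$ entries of $S$ realizing exactly $l$ sign changes among the nonzero entries. Reading $S$ in increasing order, a sign pattern on $r$ slots with exactly $l$ sign changes is the same as an ordered set partition of $\{1,\dots,r\}$ into $l+1$ blocks of consecutive positions (the maximal runs of constant sign), and once we quotient by the global sign the first block's sign is fixed; hence the number of such patterns is $\binom{r-1}{l}$. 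Therefore the rank sizes of $R_{n,l}$ are
\begin{align}\label{eq:ranksizes}
W_r \;=\; \binom{n}{r}\binom{r-1}{l}, \qquad l+1 \le r \le n.
\end{align}
I would open the proof by establishing \eqref{eq:ranksizes} carefully (this should already be available from the $f$-vector computation referenced as \cref{thm:FH}), since everything else rests on it.

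For rank-log-concavity I must show $W_r^2 \ge W_{r-1}W_{r+1}$ for all $r$. Using \eqref{eq:ranksizes} this factors as the product of two inequalities: $\binom{n}{r}^2 \ge \binom{n}{r-1}\binom{n}{r+1}$, which is the classical log-concavity of binomial coefficients, and $\binom{r-1}{l}^2 \ge \binom{r-2}{l}\binom{r}{l}$, which is log-concavity of the same along the \emph{upper} index — also classical, or a one-line ratio check: $\binom{r-1}{l}^2/\big(\binom{r-2}{l}\binom{r}{l}\big) = \frac{(r-1)(r-1-l)}{r(r-2-l)}\cdot\frac{(r-1-l)?}{?}$ simplifies to something manifestly $\ge 1$ after clearing denominators (I would present the clean cross-multiplied form rather than the ratio). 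The product of two log-concave positive sequences is log-concave, so $(W_r)$ is log-concave; I should also note the support $\{l+1,\dots,n\}$ is an interval, so there are no internal zeros to worry about and rank-log-concavity holds in the strong sense.

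For the normalized flow / strong Sperner part I would use the standard machinery: it suffices to exhibit, for each $r$, a weighting of the covering edges between rank $r$ and rank $r+1$ so that the total outflow from each rank-$r$ element and the total inflow to each rank-$(r+1)$ element are the uniform values dictated by a normalized flow (equivalently, produce for each consecutive pair of ranks a fractional perfect matching in the bipartite covering graph that is regular on each side, i.e.\ an edge weighting whose row sums are $1/W_r$-type normalized and column sums likewise). The cleanest route is to observe that $R_{n,l}$ with a minimum adjoined is an induced subposet of the face poset of $\Gr_{k,n}^{\ge 0}$, but more directly I would build the flow by hand: split an element of rank $r$ into its support-plus-pattern data and, on the support side, route flow exactly as in the Boolean algebra $B_n$ (the uniform flow that witnesses that $B_n$ is strongly Sperner), while checking that the "pattern on a run structure" data transforms compatibly under adding one element to the support — adding an index to $S$ either lands inside an existing run (splitting it, which changes the run count and is forbidden unless it extends a boundary run) or at the two ends. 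The combinatorics of which covers exist is exactly "insert a new nonzero coordinate, or turn a $0$ into $\pm$, without creating or destroying a sign change," so the flow weights should be chosen proportional to the number of completions of the larger element restricted to each smaller element. I expect this last point — verifying that the naive product flow (Boolean flow on supports $\times$ uniform on patterns) actually has constant row and column sums, rather than merely balanced ones — to be the main obstacle, because the number of valid sign patterns $\binom{r-1}{l}$ depends on $r$ and the pattern-extension map is not uniform (inserting at an endpoint versus interior behaves differently). If the direct product flow fails to be normalized, the fallback is to invoke the criterion that a rank-symmetric-free but rank-log-concave poset which is "LYM-type" admits a normalized flow provided it has the \emph{nested chain} / \emph{symmetric chain}-like structure; concretely I would then construct a symmetric chain decomposition of $R_{n,l}$ by adapting the de Bruijn–Tengbergen–Kruyswijk recursion to the pattern data, which automatically yields both the normalized flow and the strong Sperner property. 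Either way, once a normalized flow is in hand, strong Sperner-ness follows from the standard argument (a flow of value $1$ decomposes into chains covering every element, so no antichain meets any level set in more than $\max_r W_r$ elements, and more strongly no union of $j$ antichains exceeds the sum of the $j$ largest $W_r$).
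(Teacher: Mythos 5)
Your first half is fine: the rank sizes $W_r=\binom{n}{r}\binom{r-1}{l}$ (indexed by support size, which matches the paper's $\binom{l+r-1}{l}\binom{n}{l+r}$ after the shift $r\mapsto l+r$) follow from \cref{cor:whitney_numbers}/\cref{prop:flagfvector}, and factoring the sequence as a product of two classically log-concave sequences is exactly the paper's argument for rank-log-concavity. So that part is correct and essentially identical to the paper.

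The second half has a genuine gap: you never actually produce a normalized flow, and the two routes you sketch do not close it. The ``naive product flow'' (Boolean flow on supports, uniform split among the admissible blocks) fails already for $R_{n,1}$: give each cover weight $1$ when the added element can go into only one block and $\tfrac12$ when it can go into either block; then for $y=(B_1,B_2)$ of rank $r+1$ the inflow is $|B_1|+|B_2|-1=r+1$ when $|B_1|,|B_2|\ge 2$ but $r+\tfrac12$ when $|B_1|=1$, so \ref{NF2} is violated --- precisely the obstacle you flag but do not resolve. The fallback via a symmetric-chain-type decomposition is unsubstantiated: $R_{n,l}$ is not rank-symmetric, no adaptation of the de Bruijn--Tengbergen--Kruyswijk recursion is given, and ``LYM-type implies normalized flow'' is circular here since the LYM inequality is what the flow is meant to deliver. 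The paper's resolution is a specific non-uniform weighting (\cref{def:flow_construction}): when $a$ fits between blocks $i$ and $i+1$ of $x=(A_1,\dots,A_{l+1})$, the weight on adding $a$ to block $i$ is $|A_1\cup\cdots\cup A_i|/|A_1\cup\cdots\cup A_{l+1}|$ and to block $i+1$ is the complementary ratio; \ref{NF1} is then immediate, and \ref{NF2} requires the computation in \cref{thm:flow}, where removing the minimum, an interior element, or the maximum of a block of $y$ contributes $\frac{s_i+\cdots+s_{l+1}-1}{l+r}$, $1$, or $\frac{s_1+\cdots+s_i-1}{l+r}$ respectively, summing to $\frac{l+r+1}{l+r}(s_i-1)$ per block and hence to the rank-dependent constant $\frac{r(l+r+1)}{l+r}$. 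Without this (or some equally explicit) construction and verification, the strongly Sperner claim is not proved.
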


Finally, we consider a poset closely related to $R_{n,l}$, denoted $P_{n,l}$, introduced by Machacek \cite{machacek}. It consists of projective sign vectors of length $n$ with at most (rather than exactly) $l$ sign changes, under the relation \eqref{sign_vector_definition}. For example, $P_{3,1}$ is depicted in \cref{fig:P31}. The poset $P_{n,l}$ can be regarded as a quotient of the face poset of a certain simplicial complex $\mathcal{B}(l,n)$ studied by Klee and Novik \cite{klee_novik12}. Notice that $P_{n,l}$ also specializes to $\bool{n}$ when $l=0$. Machacek~\cite{machacek} showed that the order complex of $P_{n,l}$ is a manifold with boundary which is homotopy equivalent to $\mathbb{RP}^l$, and homeomorphic to $\mathbb{RP}^{n-1}$ when $l = n-1$. Although $\hat{P}_{n,l}$ is not shellable in general, Bergeron, Dermenjian, and Machacek~\cite{bergeron_dermenjian_machacek20} showed that when $l$ is even or $l = n-1$, the order complex of $P_{n,l}$ is partitionable. This is a weaker property which still implies that the $h$-vector is nonnegative, and they showed that the $h$-vector counts certain type-$D$ permutations with respect to type-$B$ descents.

We prove that $P_{n,l}$ is rank-log-concave (see \cref{thm:P_unimodal}), and we conjecture that it is Sperner (see \cref{conj:sperner}). We prove this conjecture when $l$ equals $0$, $1$, or $n-1$ by constructing a normalized flow (see \cref{prop:sperner_conj}).

The remainder of this paper is organized as follows. In \cref{sec:shelling} we give some background on poset topology and prove \cref{thm:shelling_intro} (see \cref{thm:EL}). In \cref{sec:face_numbers} we consider the $f$-vector and $h$-vector of $R_{n,l}$. In \cref{sec:sperner} we give background on unimodality, log-concavity, and the Sperner property, and prove \cref{thm:sperner_intro}. In \cref{sec:P} we consider the poset $P_{n,l}$.

\begin{figure}[t]
\begin{center}
    \begin{tikzpicture}[xscale=1.2]
            \node (pm0) at (-2,0) {$(+,-,0)$};
            \node (p0m) at (0,0) {$(+,0,-)$};
            \node (0pm) at (2,0) {$(0,+,-)$};
            \node (pmm) at (-1,2) {$(+,-,-)$};
            \node (ppm) at (1,2) {$(+,+,-)$};
     
            \draw (pm0.north) -- (pmm.south);
            \draw (p0m.north) -- (ppm.south);
            \draw (p0m.north) -- (pmm.south);
            \draw (0pm.north) -- (ppm.south);
    \end{tikzpicture}\hspace*{36pt}
    \begin{tikzpicture}[xscale=1.2]
            \node (pm0) at (-2,0) {$(\{1\},\{2\})$};
            \node (p0m) at (0,0) {$(\{1\},\{3\})$};
            \node (0pm) at (2,0) {$(\{2\},\{3\})$};
            \node (pmm) at (-1,2) {$(\{1\},\{2,3\})$};
            \node (ppm) at (1,2) {$(\{1,2\},\{3\})$};
     
            \draw (pm0.north) -- (pmm.south);
            \draw (p0m.north) -- (ppm.south);
            \draw (p0m.north) -- (pmm.south);
            \draw (0pm.north) -- (ppm.south);
    \end{tikzpicture}
\end{center}
    \caption{The Hasse diagram of the poset $R_{3,1}$, with elements labeled as sign vectors (left) and as tuples of sets (right).}
    \label{fig:R31}
\end{figure}
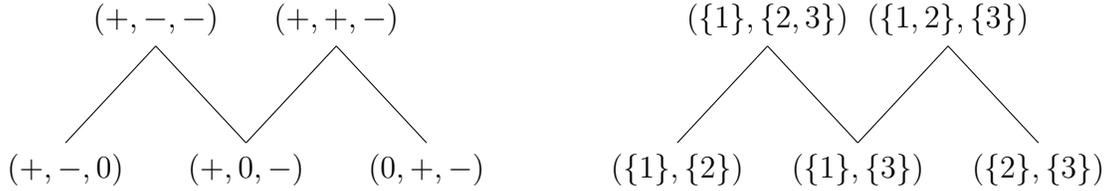

\subsection*{Acknowledgments}
We thank Isabella Novik and Bruce Sagan for helpful comments, and anonymous reviewers for their valuable feedback.

\section{EL-labeling}\label{sec:shelling}

\subsection{Notation and background}\label{sec:notation}

We let $\mathbb{N}$ denote $\{0, 1, 2, \dots\}$. For $n\in\mathbb{N}$ we define $[n] := \{1, 2, \dots, n\}$, and for $0 \le k \le n$ we let $\binom{[n]}{k}$ denote the set of $k$-element subsets of $[n]$. We let $\sym{n}$ denote the symmetric group of all permutations of $[n]$.

We assume the reader has some familiarity with posets; we refer to \cite{stanley12,wachs07} for further background. We use $\lessdot$ to denote cover relations in a poset, i.e., $x \lessdot y$ if and only if $x < y$ and there does not exist $z$ such that $x < z < y$.
\begin{definition}\label{def:graded}
Let $P$ be a finite poset. We say that $P$ is {\itshape graded} (or {\itshape pure}) if every maximal chain has the same length $d$, which we call the {\itshape rank} of $P$.
\end{definition}

\begin{definition}\label{def:hat}
Let $P$ be a poset. We define the {\itshape bounded extension} as the poset $\hat{P}$ obtained from $P$ by adjoining a new minimum $\hat{0}$ and a new maximum $\hat{1}$.
\end{definition}

We now recall the definition of an EL-labeling, due to Bj\"{o}rner \cite[Definition 2.1]{bjorner80}. We slightly modify the original definition, following Wachs \cite{wachs07}; see \cite[Remark 3.2.5]{wachs07} for further discussion.
\begin{definition}[{\cite[Definition 3.2.1]{wachs07}}]\label{def:EL-labeling}
Let $P$ be a finite graded poset. An {\itshape edge labeling} of $P$ is a function $\lambda$ from the set of edges of the Hasse diagram of $P$ (i.e.\ the cover relations of $P$) to a poset $(\Lambda,\preceq)$. An {\itshape increasing chain} is a saturated chain $x_0 \lessdot x_1 \lessdot \cdots \lessdot x_r$ in $P$ whose edge labels strictly increase in $\Lambda$:
$$
\lambda(x_0\lessdot x_1) \prec \lambda(x_1\lessdot x_2) \prec \cdots \prec \lambda(x_{r-1}\lessdot x_r).
$$
We call $\lambda$ an {\itshape EL-labeling} of $P$ if the following properties hold for every closed interval $[x,y]$ in $P$:
\begin{enumerate}[label={(EL\arabic*)},leftmargin=48pt,itemsep=2pt]
\item\label{EL1} there exists a unique increasing maximal chain $C_0$ in $[x,y]$; and
\item\label{EL2} if $x \lessdot z \le y$ such that $z\neq x_1$, where $x\lessdot x_1$ is the first edge of $C_0$, then $\lambda(x\lessdot x_1) \prec \lambda(x\lessdot z)$.\footnote{One may replace \ref{EL2} by the condition that $C_0$ is lexicographically minimal among all maximal chains of $[x,y]$; see \cite[Proposition 2.5]{bjorner80}.}
\end{enumerate}

\end{definition}

Bj\"{o}rner showed that a finite graded poset with an EL-labeling is shellable \cite[Theorem 2.3]{bjorner80}.

\subsection{Edge labeling}\label{sec:EL}

We now study the bounded extension $\hat{R}_{n,l}$ of $R_{n,l}$. Recall that $R_{n,l}$ consists of all projective sign vectors of length $n$ with exactly $l$ sign changes, under the relation \eqref{sign_vector_definition}. We begin by giving an alternative definition of $R_{n,l}$.
\begin{definition}\label{def:tuples}
Let $0 \le l < n$. We may equivalently define $R_{n,l}$ as follows. Its elements are $(l+1)$-tuples $(A_1, \dots, A_{l+1})$ of nonempty subsets of $[n]$ (called {\itshape blocks}) such that $\max(A_i) < \min(A_{i+1})$ for all $i \in [l]$. The order relation on $(l+1)$-tuples is given by component-wise containment:
$$
(A_1, \dots, A_{l+1}) \le (B_1, \dots, B_{l+1}) \quad \iff \quad A_i \subseteq B_i \text{ for } i\in [l+1].
$$
We may verify that this is equivalent to the definition of $R_{n,l}$ from \eqref{sign_vector_definition}, where an $(l+1)$-tuple of subsets records the positions of the consecutive runs of $+$'s and $-$'s in a sign vector. That is, $(A_1, \dots, A_{l+1})$ corresponds to the sign vector $(v_1, \dots, v_n)$ such that for $1 \le i \le n$,
$$
v_i = \begin{cases}
(-1)^{j-1}, & \text{ if $i\in A_j$ for some $j\in [l+1]$}; \\
0, & \text{ if $i\notin A_1 \cup \cdots \cup A_{l+1}$}.
\end{cases}
$$
For example, in $R_{9,2}$, the tuple of sets $(\{1,2,4\},\{6,8\},\{9\})$ corresponds to the sign vector $(+,+,0,+,0,-,0,-,+)$. Also see \cref{fig:R31}.
\end{definition}
We observe that the bounded extension $\hat{R}_{n,l}$ of $R_{n,l}$ is graded. Explicitly, the minimum $\hat{0}$ has rank $0$, the maximum $\hat{1}$ has rank $n-l+1$, and $(A_1, \dots, A_{l+1})$ has rank $|A_1| + \cdots + |A_l| - l$.

We now divide the cover relations of $R_{n,l}$ into two types; see \cref{rem:type_motivation} for motivation.
\begin{definition}\label{lem:covers}
Let $0 \le l < n$, and let $x = (A_1, \dots, A_{l+1})\in R_{n,l}$. Note that the elements of $R_{n,l}$ which cover $x$ are precisely those that can be obtained from it by adding some element $a\in [n]\setminus(A_1\cup\cdots\cup A_{l+1})$ to the $i$th block $A_i$, where $i\in [l+1]$ such that
$$
\max(A_{i-1}) < a < \min(A_{i+1}).
$$
(We take the inequality above to be $a < \min(A_2)$ when $i=1$, and $\max(A_l) < a$ when $i=l+1$.) We say that such a cover relation is of {\itshape type $\alpha$} if $a < \max(A_i)$, and of {\itshape type $\beta$} if $a > \max(A_i)$.
\end{definition}

\begin{definition}\label{def:labels}
Let $0 \le l < n$. We define a total order $(\Lambda_{n,l},\preceq)$ on the disjoint union of $\{\alpha,\beta\}\times [l+1]\times [n+1]$ and $\binom{[n]}{l+1}$, as follows (where $\ast$ denotes an arbitrary number):
\begin{itemize}[itemsep=2pt]
\item $(\alpha,\ast,\ast) \prec I \prec (\beta,\ast,\ast)$ for all $I\in\binom{[n]}{l+1}$;
\item $(\alpha,i,\ast) \prec (\alpha,j,\ast)$ and $(\beta,i,\ast) \succ (\beta,j,\ast)$ for all $i<j$ in $[l+1]$;
\item $(\alpha,i,a) \prec (\alpha,i,b)$ and $(\beta,i,a) \prec (\beta,i,b)$ for all $i\in [l+1]$ and $a<b$ in $[n+1]$; and
\item $\binom{[n]}{l+1}$ is ordered lexicographically: $\{1, \dots, l+1\} \prec \cdots \prec \{n-l, \dots, n\}$.
\end{itemize}
We define an edge labeling on $\hat{R}_{n,l}$, with label set $\Lambda_{n,l}$, as follows.
\begin{enumerate}[label=(\roman*), leftmargin=*, itemsep=2pt]
\item\label{labels1} We label the edge $\hat{0} \lessdot (\{a_1\}, \dots, \{a_{l+1}\})$ by $\{a_1, \dots, a_{l+1}\}\in\binom{[n]}{l+1}$.
\item\label{labels2} Let $\hat{0} < x \lessdot y < \hat{1}$. Then as in \cref{lem:covers}, $y$ is obtained from $x$ by adding some element $a$ to the $i$th block of $x$, in a cover relation of type $\gamma$ (where $\gamma\in\{\alpha,\beta\}$). We label the edge $x\lessdot y$ by $(\gamma,i,a)$.
\item\label{labels3} We label the edge $x\lessdot\hat{1}$ by $(\beta,l+1,n+1)$.
\end{enumerate}
For example, see \cref{fig:R31_labeled}.
\end{definition}

\begin{remark}\label{rem:type_motivation}
We were led to the construction in \cref{def:labels} in part so that the following property holds (though we will not end up using it). Let $x\in\hat{R}_{n,l}$ such that $x$ is not covered by $\hat{1}$, and let $y_1, \dots, y_r$ be the elements of $\hat{R}_{n,l}$ which cover $x$, ordered so that the labels of $x\lessdot y_1, \dots, x\lessdot y_r$ are increasing in $(\Lambda_{n,l},\preceq)$. Then $y_1, \dots, y_r$ are in increasing order in the lexicographic order on $(l+1)$-tuples. For example, see \cref{fig:R92}. In fact, one can show that ordering the atoms of $[x,\hat{1}]$ lexicographically for all $x\in\hat{R}_{n,l}$ defines a {\itshape recursive atom ordering} of $\hat{R}_{n,l}$ (see e.g.\ \cite[Section 4.2]{wachs07}), where the order of the atoms of $[x,\hat{1}]$ does not depend on a choice of maximal chain of $[\hat{0},x]$. Li \cite[Lemma 1.1]{li21} showed that any finite, bounded, and graded poset admitting such a recursive atom ordering is EL-shellable, so this provides an alternative way to prove \cref{thm:shelling_intro}. We omit the proof of this fact, and instead find it simplest to work only with the edge labeling in \cref{def:labels}.
\end{remark}

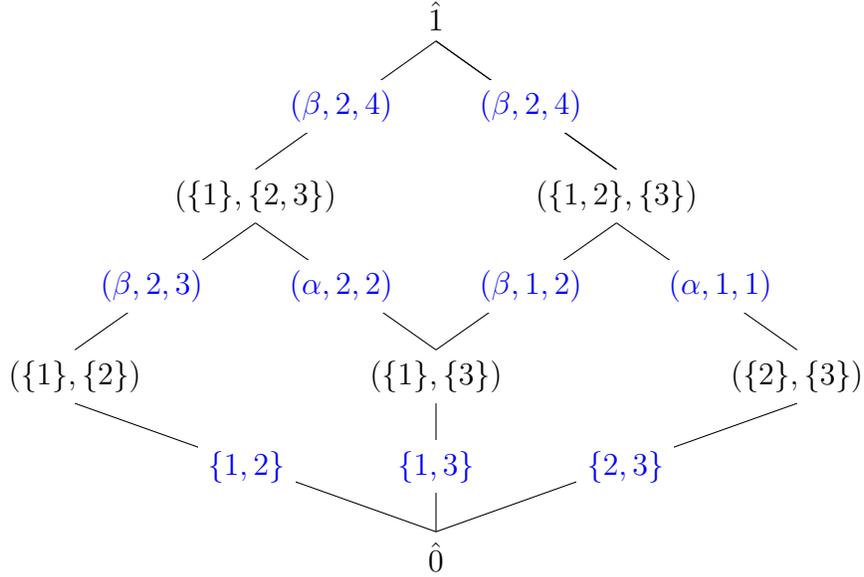
\begin{figure}[t]
\begin{center}
    \begin{tikzpicture}[xscale=2.4,yscale=1.2]
            \node (min) at (0,-2) {$\hat{0}$};
            \node (pm0) at (-2,0) {$(\{1\},\{2\})$};
            \node (p0m) at (0,0) {$(\{1\},\{3\})$};
            \node (0pm) at (2,0) {$(\{2\},\{3\})$};
            \node (pmm) at (-1,2) {$(\{1\},\{2,3\})$};
            \node (ppm) at (1,2) {$(\{1,2\},\{3\})$};
            \node (max) at (0,4) {$\hat{1}$};
     
            \draw (min.north) -- (pm0.south);
            \draw (min.north) -- (p0m.south);
            \draw (min.north) -- (0pm.south);
            \draw (pm0.north) -- (pmm.south);
            \draw (p0m.north) -- (ppm.south);
            \draw (p0m.north) -- (pmm.south);
            \draw (0pm.north) -- (ppm.south);
            \draw (ppm.north) -- (max.south);
            \draw (pmm.north) -- (max.south);
            \draw (ppm.north) -- (max.south);
            
            \node at (-1.05,-1) [color=blue,fill=white] {$\{1,2\}$};
            \node at (0,-1) [color=blue,fill=white] {$\{1,3\}$};
            \node at (1.05,-1) [color=blue,fill=white] {$\{2,3\}$};
            \node at (-1.575,1) [color=blue,fill=white] {$(\beta,2,3)$};
            \node at (-0.525,1) [color=blue,fill=white] {$(\alpha,2,2)$};
            \node at (0.525,1) [color=blue,fill=white] {$(\beta,1,2)$};
            \node at (1.575,1) [color=blue,fill=white] {$(\alpha,1,1)$};
            \node at (-0.525,3) [color=blue,fill=white] {$(\beta,2,4)$};
            \node at (0.525,3) [color=blue,fill=white] {$(\beta,2,4)$};
    \end{tikzpicture}
\end{center}
    \caption{The edge labeling of $\hat{R}_{3,1}$ in \cref{def:labels}.}
    \label{fig:R31_labeled}
\end{figure}

\begin{figure}[t]
\begin{center}
    \begin{tikzpicture}[xscale=1.2,yscale=1.5]
            \pgfmathsetmacro{\da}{0.35};
            
            \node (min) at (0,-2) {$(24,6,8)$};
            \node (x1) at (-6,0) {$(124,6,8)$};
            \node (x2) at (-4,0) {$(234,6,8)$};
            \node (x3) at (-2,0) {$(24,56,8)$};
            \node (x4) at (0,0) {$(24,6,78)$};
            \node (x5) at (2,0) {$(24,6,89)$};
            \node (x6) at (4,0) {$(24,67,8)$};
            \node (x7) at (6,0) {$(245,6,8)$};

            \draw (min.north) -- (x1.south);
            \draw (min.north) -- (x2.south);
            \draw (min.north) -- (x3.south);
            \draw (min.north) -- (x4.south);
            \draw (min.north) -- (x5.south);
            \draw (min.north) -- (x6.south);
            \draw (min.north) -- (x7.south);
            
            \node at (-6.3+6.3*\da,-2*\da) [color=blue,fill=white] {$(\alpha,1,1)$};
            \node at (-4.2+4.2*\da,-2*\da) [color=blue,fill=white] {$(\alpha,1,3)$};
            \node at (-2.1+2.1*\da,-2*\da) [color=blue,fill=white] {$(\alpha,2,5)$};
            \node at (0,-2*\da) [color=blue,fill=white] {$(\alpha,3,7)$};
            \node at (2.1-2.1*\da,-2*\da) [color=blue,fill=white] {$(\beta,3,9)$};
            \node at (4.2-4.2*\da,-2*\da) [color=blue,fill=white] {$(\beta,2,7)$};
            \node at (6.3-6.3*\da,-2*\da) [color=blue,fill=white] {$(\beta,1,5)$};
    \end{tikzpicture}
\end{center}
\caption{The element $(\{2,4\},\{6\},\{8\})\in\hat{R}_{9,2}$, and the elements covering it ordered by increasing edge label (equivalently, ordered lexicographically as $(l+1)$-tuples).}
    \label{fig:R92}
\end{figure}
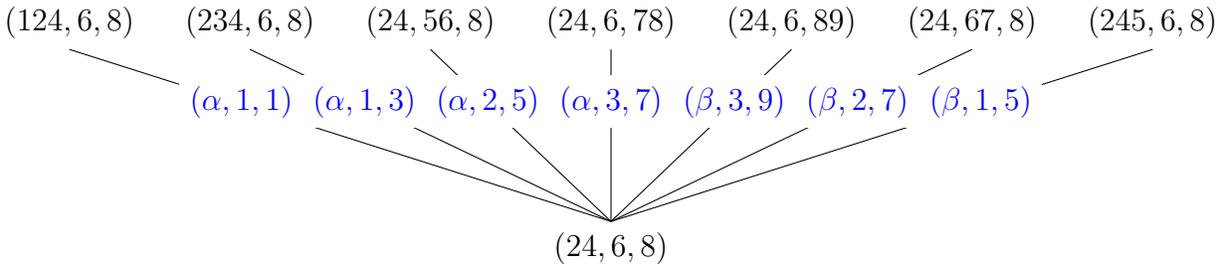

\begin{theorem}\label{thm:EL}
The edge labeling of $\hat{R}_{n,l}$ in \cref{def:labels} is an EL-labeling.
\end{theorem}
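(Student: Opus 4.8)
The plan is to check directly that the edge labeling $\lambda$ from \cref{def:labels}, with totally ordered label set $(\Lambda_{n,l},\preceq)$, satisfies \ref{EL1} and \ref{EL2} on every closed interval of $\hat{R}_{n,l}$. Since $\hat{R}_{n,l}$ is graded and one-element intervals are trivial, there are four cases: (a) $[x,y]$ with $x<y$ in $R_{n,l}$; (b) $[\hat{0},y]$ with $y\in R_{n,l}$; (c) $[x,\hat{1}]$ with $x\in R_{n,l}$; and (d) $[\hat{0},\hat{1}]$. The basic bookkeeping device is the following: when a saturated chain in $R_{n,l}$ is built up by adjoining one element at a time, a step that adjoins $a$ to the $i$th block is of type $\alpha$ exactly when $a$ lies below the current maximum of that block and of type $\beta$ exactly when $a$ lies above it, and the current maximum of the $i$th block does not change until the first type-$\beta$ step into it. Combined with the order $\preceq$ on $\Lambda_{n,l}$, this shows that an increasing saturated chain inside $R_{n,l}$ must perform all of its type-$\alpha$ steps first---grouped by block, with the blocks taken in increasing order and, within a block, the adjoined elements taken in increasing order---and then all of its type-$\beta$ steps---grouped by block, with the blocks now taken in \emph{decreasing} order, and within a block in increasing order. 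For an interval $[x,y]$ of type (a) with $x=(A_1,\dots,A_{l+1})$ and $y=(B_1,\dots,B_{l+1})$, it follows that the unique increasing maximal chain is: for $i=1,\dots,l+1$ in turn, adjoin to the $i$th block, in increasing order, the elements of $B_i\setminus A_i$ below $\max(A_i)$; then for $i=l+1,\dots,1$ in turn, adjoin to the $i$th block, in increasing order, the elements of $B_i\setminus A_i$ above $\max(A_i)$. Existence and the increasing property are immediate, and uniqueness holds because each element of $B_i\setminus A_i$ below $\max(A_i)$ is forced into the $\alpha$-phase and each one above it into the $\beta$-phase. For \ref{EL2}, the covers of $x$ lying below $y$ are exactly the single-element adjunctions to some $B_i\setminus A_i$, with labels $(\alpha,i,a)$ or $(\beta,i,a)$ according to type, and the first step of the chain above carries the $\preceq$-least such label.

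The intervals meeting $\hat{1}$ are the crux, because every edge into $\hat{1}$ carries the same label $(\beta,l+1,n+1)$, which satisfies $(\beta,l+1,n+1)\prec(\beta,i,a)$ for all $i\le l$. Hence an increasing maximal chain of $[x,\hat{1}]$, where $x=(A_1,\dots,A_{l+1})$, can contain no type-$\beta$ step into any block $i\le l$ before it reaches $\hat{1}$; so every such block grows only by type-$\alpha$ steps, its maximum never changes, and the maximal element of $R_{n,l}$ that the chain reaches is forced to be $m=(C_1,\dots,C_{l+1})$ with $C_i=\{\max(A_{i-1})+1,\dots,\max(A_i)\}$ for $i\le l$ (taking $\max(A_0):=0$) and $C_{l+1}=\{\max(A_l)+1,\dots,n\}$. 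I will check that $m$ lies in $R_{n,l}$, that $m\ge x$, that $m$ is a maximal element, and that it is the \emph{only} maximal element reachable under this constraint; the increasing maximal chain of $[x,\hat{1}]$ is then forced to be the unique increasing maximal chain of the subinterval $[x,m]$ of $R_{n,l}$ (which, by the shape of $m$, is an $\alpha$-phase over all blocks followed by a $\beta$-phase confined to block $l+1$, adjoining $\max(A_l)+1,\dots,n$), followed by the edge $m\lessdot\hat{1}$; its labels increase since $(\beta,l+1,a)\prec(\beta,l+1,n+1)$ for $a\le n$. Condition \ref{EL2} then reduces to the minimality check of case (a), together with the remark that if $x$ is itself a maximal element of $R_{n,l}$ then its only cover is $\hat{1}$. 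Case (d) is this analysis run from $\hat{0}$: the chain must begin with $\hat{0}\lessdot(\{1\},\{2\},\dots,\{l+1\})$, because $\{1,\dots,l+1\}$ is the $\preceq$-minimum of $\binom{[n]}{l+1}$ and no type-$\alpha$ step can follow a label from $\binom{[n]}{l+1}$, after which the forced $\beta$-phase leads to $m=(\{1\},\dots,\{l\},\{l+1,\dots,n\})$ and then to $\hat{1}$.

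Case (b) is handled in the same spirit: no type-$\alpha$ step can follow the initial label from $\binom{[n]}{l+1}$, so the bottom vertex $(\{a_1\},\dots,\{a_{l+1}\})$ of the chain is forced to be $(\{\min B_1\},\dots,\{\min B_{l+1}\})$---otherwise some element of $B_i$ below $a_i$ would have to be inserted by a type-$\alpha$ step---and thereafter the $\beta$-phase up to $y$ is forced. For \ref{EL2} in cases (b) and (d), the key point is that $\binom{[n]}{l+1}$ is ordered lexicographically, and $b_i\ge\min(B_i)$ for all $i$ forces $\{b_1,\dots,b_{l+1}\}\succeq\{\min B_1,\dots,\min B_{l+1}\}$ lexicographically, with equality only when the tuples coincide.

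The step I expect to be the main obstacle is the one in case (c): showing at once that the candidate top element $m$ can be reached by a chain of the prescribed shape and that it is the \emph{only} maximal element of $R_{n,l}$ reachable once type-$\beta$ steps into blocks $i\le l$ are barred---that is, establishing existence and uniqueness of the increasing maximal chain up to $\hat{1}$ together. Verifying that $m\in R_{n,l}$, that $m\ge x$, and that the chains written above traverse only genuine cover relations is routine but relies on tracking the inequalities $\max(A_{i-1})<\min(A_i)$ as the blocks grow.
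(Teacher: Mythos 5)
Your proposal is correct and follows essentially the same route as the paper's proof: the same four-case analysis of intervals according to whether the endpoints are $\hat{0}$ or $\hat{1}$, the same description of the unique increasing chain (type-$\alpha$ steps by block in increasing order, then type-$\beta$ steps by block in decreasing order, each block's elements in increasing order), and the same forcing argument at $\hat{1}$, namely that the terminal label $(\beta,l+1,n+1)$ bars type-$\beta$ steps into blocks $i\le l$, freezing those blocks' maxima and forcing the top element $(C_1,\dots,C_{l+1})$. The only slip is cosmetic: in your case (c) the $\beta$-phase adjoins $\max(A_{l+1})+1,\dots,n$ (elements of $C_{l+1}$ below $\max(A_{l+1})$ enter during the $\alpha$-phase), not $\max(A_l)+1,\dots,n$.
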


\begin{proof}
We must verify that \ref{EL1} and \ref{EL2} hold for every closed interval $[x,y]$ in $\hat{R}_{n,l}$. We consider four cases, depending on whether $x=\hat{0}$ and $y=\hat{1}$. When $x\neq\hat{0}$ we write $x = (A_1, \dots, A_{l+1})$, and when $y\neq\hat{1}$ we write $y = (B_1, \dots, B_{l+1})$. In each case, we explicitly describe the unique maximal chain of $[x,y]$, thereby proving \ref{EL1}. It will then be apparent from the form of this maximal chain that \ref{EL2} holds.

{\itshape Case 1: $x\neq\hat{0}$, $y\neq\hat{1}$.} The maximal chains of $[x,y]$ are obtained by adding, in some order, all the elements of $B_i\setminus A_i$ to the $i$th block (for $i\in [l+1]$). The unique increasing chain is given by adding these elements in the following order:
\begin{itemize}[itemsep=2pt]
\item for $i = 1, \dots, l+1$, we add the elements of $B_i\setminus A_i$ which are less than $\max(A_i)$ to the $i$th block, in increasing order (in cover relations of type $\alpha$);
\item for $i = l+1, \dots, 1$, we add the elements of $B_i\setminus A_i$ which are greater than $\max(A_i)$ to the $i$th block, in increasing order (in cover relations of type $\beta$).
\end{itemize}
We see that \ref{EL2} holds.

{\itshape Case 2: $x=\hat{0}$, $y\neq\hat{1}$.} The first edge of any maximal chain of $[\hat{0},y]$ is labeled by an element of $\binom{[n]}{l+1}$, and so if it is increasing, after the first edge it must pass through edges only of type $\beta$. Therefore the unique increasing maximal chain of $[\hat{0},y]$ begins with the edge $\hat{0}\lessdot (\{b_1\}, \dots, \{b_{l+1}\})$, where $b_i := \min(B_i)$ for $i\in [l+1]$ (whence \ref{EL2} is satisfied), and after that follows the unique increasing chain from $(\{b_1\}, \dots, \{b_{l+1}\})$ to $y$, as in Case 1.

{\itshape Case 3: $x\neq\hat{0}$, $y=\hat{1}$.} The last edge of any maximal chain of $[x,\hat{1}]$ is labeled by $(\beta,l+1,n+1)$, and so if it is increasing, before the final edge it must pass through edges only of type $\alpha$ or with a label $(\beta,l+1,\ast)$. Therefore the unique increasing maximal chain of $[x,\hat{1}]$ ends with the edge $(C_1, \dots, C_{l+1})\lessdot\hat{1}$, where
\begin{multline*}
C_1 := \{1, 2, \dots, \max(A_1)\}, C_2 := \{\max(A_1)+1, \max(A_1)+2, \dots, \max(A_2)\},\dots,\\
C_{l+1} := \{\max(A_l)+1, \max(A_l)+2, \dots, n\},
\end{multline*}
and before that follows the unique increasing chain from $x$ to $(C_1, \dots, C_{l+1})$, as in Case 1. We see that \ref{EL2} holds.

{\itshape Case 4: $x=\hat{0}$, $y=\hat{1}$.} Reasoning as in Cases 2 and 3, the unique increasing maximal chain of $[\hat{0},\hat{1}]$ begins with the edge $\hat{0}\lessdot (\{1\}, \dots, \{l+1\})$, ends with the edge $(\{1\}, \dots, \{l\}, \{l+1, \dots, n\})\lessdot\hat{1}$, and in between follows the unique increasing chain as in Case 1. As in Case 2, \ref{EL2} holds.
\end{proof}

\begin{remark}\label{rem:lattice}
There are results in the literature which imply that various special families of posets are shellable. However, as far as we know, $\hat{R}_{n,l}$ is not contained in such a family. For example, Provan and Billera \cite[Section 3.4.2]{provan_billera80} showed that all {\itshape distributive lattices} (cf.\ \cite[Section 3.4]{stanley12}) are shellable. While $\hat{R}_{n,l}$ is a lattice, it is not distributive unless $l=0$ (in which case $R_{n,l}$ is the Boolean algebra $\bool{n}$ with the minimum removed) or $l=n-1$ (in which case $R_{n,l}$ has a single element). For example, one can see from \cref{fig:R31_labeled} that $\hat{R}_{3,1}$ is not distributive. Also, Bj\"{o}rner \cite[Theorem 3.1]{bjorner80} showed that all {\itshape semimodular lattices} (cf.\ \cite[Section 3.3]{stanley12}) are shellable. However, $\hat{R}_{n,l}$ is not upper-semimodular unless $l=0$ or $l=n-1$, and $\hat{R}_{n,l}$ is not lower-semimodular unless $l=0$, $l=n-1$, or $(n,l) = (3,1)$. For example, one can see from \cref{fig:R31_labeled} that $\hat{R}_{3,1}$ is not upper-semimodular. We omit the proofs of these claims.
\end{remark}

\begin{remark}\label{rem:related_work}
Recall that $R_{n,l}$ is the face poset of the amplituhedron $\mathcal{A}_{n,k,m}(Z)$ when $m=1$. Another interesting special case of $\mathcal{A}_{n,k,m}(Z)$ is $n=k+m$, whence it becomes the totally nonnegative Grassmannian $\Gr_{k,n}^{\ge 0}$. Williams \cite{williams07} and Bao and He \cite[Theorem 4.1]{bao_he21} showed that the face poset of $\Gr_{k,n}^{\ge 0}$ with a minimum $\hat{0}$ adjoined is EL-shellable, and Knutson, Lam, and Speyer \cite[Section 3.5]{knutson_lam_speyer13} showed that the face poset (without $\hat{0}$ adjoined) is dual EL-shellable. We point out that $R_{n,l}$ with $\hat{0}$ adjoined (but not $\hat{1}$) is an induced subposet of the face poset of $\Gr_{k,n}^{\ge 0}$ with $\hat{0}$ adjoined \cite[Theorem 5.17]{karp_williams19}, and so it is EL-shellable by \cite{williams07,bao_he21}. Therefore the main difficulty in proving \cref{thm:shelling_intro} is in dealing with the adjoined maximum $\hat{1}$. Our EL-labeling of $\hat{R}_{n,l}$ does not use the labelings of \cite{williams07,knutson_lam_speyer13,bao_he21}, and it is not clear to us how our labeling is related to these. We plan to study this further in future work.
\end{remark}

\section{\texorpdfstring{$f$}{f}-vector and \texorpdfstring{$h$}{h}-vector}\label{sec:face_numbers}

\noindent In this section we examine the $f$-vector and $h$-vector of $R_{n,l}$, as well as their refinements by ranks, namely the flag $f$-vector and flag $h$-vector. We give a combinatorial interpretation for the $h$-vector in terms of the EL-labeling of \cref{sec:EL}, and also prove explicit formulas for the $f$-vector and $h$-vector.

\subsection{Background}\label{sec:face_numbers_background}

We refer to \cite{stanley96,stanley12} for background on the $f$-vector and $h$-vector.
\begin{definition}[{\cite[Section 3.13]{stanley12}}]\label{def:hvector}
Let $P$ be a finite graded poset of rank $d-1$, with ranks labeled from $1$ to $d$. For $S\subseteq [d]$, we let $\alpha_S$ be the number of chains of $P$ supported exactly at the ranks in $S$; we call $\alpha$ the {\itshape flag $f$-vector} of $P$. We also define the {\itshape flag $h$-vector} $\beta$ of $P$ by
$$
\beta_S := \sum_{T\subseteq S}(-1)^{|S\setminus T|}\alpha_T, \quad \text{ or equivalently}, \quad \alpha_S =: \sum_{T\subseteq S}\beta_T \quad (S\subseteq [d]).
$$
Alternatively, let $P_S$ denote the induced subposet of $P$ consisting of all elements whose rank lies in $S$. Then $\alpha_S$ is the number of maximal chains of $P_S$, and $(-1)^{|S|+1}\beta_S$ is the M\"{o}bius invariant $\mu(\hat{P}_S)$ of the bounded extension of $P_S$.

We define the $f$-vector $(f_{-1}, f_0, \dots, f_{d-1})$ and $h$-vector $(h_0, \dots, h_d)$ of $P$ by
$$
f_{i-1} := \sum_{S\in\binom{[d]}{i}}\alpha_S \quad \text{ and } \quad h_i := \sum_{S\in\binom{[d]}{i}}\beta_S \quad (0 \le i \le d).
$$
Defining the generating functions\footnote{Our $F(t)$ and $H(t)$ are the reverses of the generating functions in \cite{stanley12}.}
$$
F(t) := \sum_{i=0}^df_{i-1}t^i \quad \text{ and } \quad H(t) := \sum_{i=0}^dh_it^i,
$$
the $f$-vector and $h$-vector are related by the equation
\begin{align}\label{eq:FtoH}
H(t) = (1-t)^dF(\textstyle\frac{t}{1-t}).
\end{align}

\end{definition}

\begin{remark}\label{rem:hvector_bounded}
Let $P$ be a finite graded poset of rank $d-1$, with ranks labeled from $1$ to $d$. Let $\hat{P}$ denote the bounded extension of $P$, with ranks labeled from $0$ to $d+1$. Then for all $S\subseteq\{0, \dots, d+1\}$, we have \cite[p.\ 294]{stanley12}
$$
\alpha_S(\hat{P}) = \alpha_{S\setminus\{0,d+1\}}(P) \quad \text{ and } \quad \beta_S(\hat{P}) = \begin{cases}
0, & \text{ if $0\in S$ or $d+1\in S$}; \\
\beta_S(P), & \text{ otherwise}.
\end{cases}
$$
In particular, $P$ and $\hat{P}$ have the same (flag) $h$-vector, and the (flag) $f$-vector of $\hat{P}$ is easily determined from $P$. Therefore enumerative results for $R_{n,l}$ apply as well to $\hat{R}_{n,l}$, and vice-versa. Keeping this connection in mind, we will label the ranks of $R_{n,l}$ from $1$ to $n-l$ (rather than from $0$ to $n-l-1$).
\end{remark}

\begin{example}\label{eg:hvector}
Consider the poset $R_{3,1}$, shown in \cref{fig:R31}. Then $d=2$, and
\begin{align*}
&\alpha_\varnothing = 1, &&\alpha_{\{1\}} = 3, &&\alpha_{\{2\}} = 2, &&\alpha_{\{1,2\}} = 4, &&(f_{-1},f_0,f_1) = (1,5,4), &&F(t) = 1+5t+4t^2; \\
&\beta_\varnothing = 1, &&\beta_{\{1\}} = 2, &&\beta_{\{2\}} = 1, &&\beta_{\{1,2\}} = 0, &&(h_0,h_1,h_2) = (1,3,0), &&H(t) = 1+3t.
\end{align*}
\end{example}

\subsection{Combinatorial interpretations}\label{sec:face_numbers_interpretations}

Bj\"{o}rner \cite[Theorem 2.7]{bjorner80}, based on work of Stanley \cite[Theorem 1.2]{stanley72a}, gave a combinatorial interpretation for the flag $h$-vector of any poset with an edge labeling satisfying \ref{EL1}. We state it here in the special case of $\hat{R}_{n,l}$, with the edge labeling defined in \cref{def:labels}.
\begin{definition}\label{def:chain_descent}
Given a maximal chain $\hat{0} = x_0 \lessdot x_1 \lessdot \cdots \lessdot x_{n-l} \lessdot x_{n-l+1} = \hat{1}$ of $\hat{R}_{n,l}$, we say that $i\in [n-l]$ is a {\itshape descent} of $C$ when $\lambda(x_{i-1} \lessdot x_i) \succ \lambda(x_i \lessdot x_{i+1})$.\footnote{For edge labelings of general posets, one should replace `$\succ$' with `$\nprec$' in the definition. There is no difference for our edge labeling of $\hat{R}_{n,l}$, since the label set $\Lambda_{n,l}$ is totally ordered and no label is repeated in any maximal chain.}
\end{definition}

\begin{figure}[t]
\begin{center}
\begin{tabular}{cccc}
    \hspace*{12pt}\begin{tikzpicture}[yscale=1.0]
            \node at (0,-3) {$S = \varnothing$};
            \node (p0) at (0,-2) {$\hat{0}$};
            \node (p1) at (0,0) {$(\{1\},\{2\})$};      
            \node (p2) at (0,2) {$(\{1\},\{2,3\})$};          
            \node (p3) at (0,4) {$\hat{1}$};           
     
            \draw (p0.north) -- (p1.south);
            \draw (p1.north) -- (p2.south);
            \draw (p2.north) -- (p3.south);
            
            \node at (0,-1) [color=blue,fill=white] {$\{1,2\}$};
            \node at (0,1) [color=blue,fill=white] {$(\beta,2,3)$};
            \node at (0,3) [color=blue,fill=white] {$(\beta,2,4)$};
    \end{tikzpicture}\hspace*{12pt}&
    \hspace*{12pt}\begin{tikzpicture}[yscale=1.0]
            \node at (0,-3) {$S = \{1\}$};
            \node (p0) at (0,-2) {$\hat{0}$};
            \node (p1) at (0,0) {$(\{1\},\{3\})$};      
            \node (p2) at (0,2) {$(\{1\},\{2,3\})$};          
            \node (p3) at (0,4) {$\hat{1}$};           
     
            \draw (p0.north) -- (p1.south);
            \draw (p1.north) -- (p2.south);
            \draw (p2.north) -- (p3.south);
            
            \node at (0,-1) [color=blue,fill=white] {$\{1,3\}$};
            \node at (0,1) [color=blue,fill=white] {$(\alpha,2,2)$};
            \node at (0,3) [color=blue,fill=white] {$(\beta,2,4)$};
    \end{tikzpicture}\hspace*{12pt}&
    \hspace*{12pt}\begin{tikzpicture}[yscale=1.0]
            \node at (0,-3) {$S = \{2\}$};
            \node (p0) at (0,-2) {$\hat{0}$};
            \node (p1) at (0,0) {$(\{1\},\{3\})$};      
            \node (p2) at (0,2) {$(\{1,2\},\{3\})$};          
            \node (p3) at (0,4) {$\hat{1}$};           
     
            \draw (p0.north) -- (p1.south);
            \draw (p1.north) -- (p2.south);
            \draw (p2.north) -- (p3.south);
            
            \node at (0,-1) [color=blue,fill=white] {$\{1,3\}$};
            \node at (0,1) [color=blue,fill=white] {$(\beta,1,2)$};
            \node at (0,3) [color=blue,fill=white] {$(\beta,2,4)$};
    \end{tikzpicture}\hspace*{12pt}&
    \hspace*{12pt}\begin{tikzpicture}[yscale=1.0]
            \node at (0,-3) {$S = \{1\}$};
            \node (p0) at (0,-2) {$\hat{0}$};
            \node (p1) at (0,0) {$(\{2\},\{3\})$};      
            \node (p2) at (0,2) {$(\{1,2\},\{3\})$};          
            \node (p3) at (0,4) {$\hat{1}$};           
     
            \draw (p0.north) -- (p1.south);
            \draw (p1.north) -- (p2.south);
            \draw (p2.north) -- (p3.south);
            
            \node at (0,-1) [color=blue,fill=white] {$\{2,3\}$};
            \node at (0,1) [color=blue,fill=white] {$(\alpha,1,1)$};
            \node at (0,3) [color=blue,fill=white] {$(\beta,2,4)$};
    \end{tikzpicture}\hspace*{12pt}
\end{tabular}
\end{center}
    \caption{The maximal chains of $\hat{R}_{3,1}$ and their descent sets $S$.}
    \label{fig:R31_chains}
\end{figure}
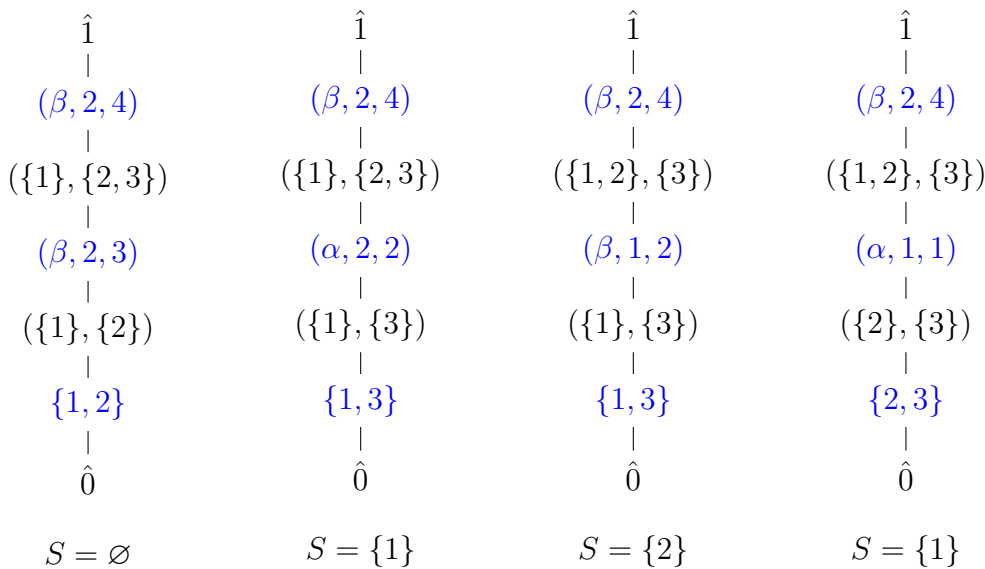

\begin{theorem}[{Bj\"{o}rner and Stanley; cf.\ \cite[Theorem 3.14.2]{stanley12}\footnote{Our conventions differ slightly from those in \cite{stanley12}, since in \ref{EL1} we require edge labels to strictly (rather than weakly) increase. Nevertheless, the result \cite[Theorem 3.14.2]{stanley12} and its proof transfer easily to our setting.}}]\label{thm:labeling_to_hvector}
Recall the edge labeling of $\hat{R}_{n,l}$ in \cref{def:labels}. For all $S\subseteq [n-l]$, we have that $\beta_S$ equals the number of maximal chains of $\hat{R}_{n,l}$ with descent set $S$. Thus for all $0 \le d \le n-l$, we have that $h_d$ equals the number of maximal chains of $\hat{R}_{n,l}$ with exactly $d$ descents.
\end{theorem}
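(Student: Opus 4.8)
The plan is to derive the statement from the general theorem of Bj\"{o}rner \cite[Theorem 2.7]{bjorner80} and Stanley \cite[Theorem 1.2]{stanley72a}, in the form recorded as \cite[Theorem 3.14.2]{stanley12}: its sole hypothesis is that the edge labeling satisfies \ref{EL1} on every closed interval, which holds for the labeling of \cref{def:labels} by \cref{thm:EL}. (The discrepancy between strict and weak increase flagged in the footnote is harmless, since $\Lambda_{n,l}$ is totally ordered and no label is repeated along a maximal chain of $\hat{R}_{n,l}$.) Since only \ref{EL1} is needed, I would prefer to record the short self-contained argument below rather than cite a black box.

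The core step is the identity
\[
\alpha_T \;=\; \#\bigl\{\text{maximal chains } C \text{ of } \hat{R}_{n,l} : D(C)\subseteq T\bigr\}
\qquad(T\subseteq[n-l]),
\]
where $D(C)$ denotes the descent set of $C$ as in \cref{def:chain_descent}, and $\alpha_T$ is as in \cref{def:hvector}, i.e.\ the number of chains of $R_{n,l}$ occupying exactly the ranks in $T$ (equivalently, by \cref{rem:hvector_bounded}, the number of chains of $\hat{R}_{n,l}$ occupying exactly the ranks in $T\cup\{0,n-l+1\}$). I would prove this identity by exhibiting a bijection. A chain counted by $\alpha_T$ is a sequence $\hat{0}=y_0<y_1<\cdots<y_{|T|}<y_{|T|+1}=\hat{1}$ of elements of $\hat{R}_{n,l}$ occupying exactly the ranks $T\cup\{0,n-l+1\}$; send it to the maximal chain $\Phi(y_\bullet)$ obtained by splicing together, inside each interval $[y_{i-1},y_i]$, the unique increasing maximal chain furnished by \ref{EL1}. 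The only vertices of $\Phi(y_\bullet)$ that can be descents are the $y_j$ ($1\le j\le|T|$), since the labels strictly increase across every other consecutive pair of edges; hence $D(\Phi(y_\bullet))\subseteq\{\rankfun{y_1},\dots,\rankfun{y_{|T|}}\}=T$. Conversely, from a maximal chain $C$ with $D(C)\subseteq T$, restrict $C$ to the ranks in $T\cup\{0,n-l+1\}$ to recover a sequence $y_\bullet$; the portion of $C$ lying in each $[y_{i-1},y_i]$ has all interior vertices at ranks outside $T$, hence carries no descent, hence is increasing, hence by \ref{EL1} is exactly the increasing maximal chain of that interval. This shows that restriction and splicing are mutually inverse, proving the identity. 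The one subtlety worth a sentence is that ``no descent'' really does force ``increasing'' here, which is immediate because $\Lambda_{n,l}$ is a total order and labels along a maximal chain are pairwise distinct.

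To conclude, partition the maximal chains of $\hat{R}_{n,l}$ by descent set: setting $c_S:=\#\{C: D(C)=S\}$, the identity becomes $\alpha_T=\sum_{S\subseteq T}c_S$. Comparing this with the relation $\alpha_T=\sum_{S\subseteq T}\beta_S$ from \cref{def:hvector} and applying M\"{o}bius inversion on the Boolean lattice $2^{[n-l]}$ yields $\beta_S=c_S$ for every $S\subseteq[n-l]$, which is the first assertion. The second assertion then follows by summing over $S$ of size $d$: $h_d=\sum_{S\in\binom{[n-l]}{d}}\beta_S=\#\{C:|D(C)|=d\}$.

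I do not anticipate a genuine obstacle: everything is the standard Bj\"{o}rner--Stanley template, and the only facts special to $\hat{R}_{n,l}$ are \cref{thm:EL} (which supplies \ref{EL1}) and the triviality that labels are not repeated along a maximal chain. The one bit of bookkeeping to be careful with is the rank shift caused by the adjoined bounds $\hat{0}$ and $\hat{1}$, which is exactly what \cref{rem:hvector_bounded} is for.
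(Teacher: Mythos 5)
Your proof is correct: the paper does not reprove this statement but simply cites the Bj\"{o}rner--Stanley theorem \cite[Theorem 3.14.2]{stanley12}, and your splice/restrict bijection (chains supported on $T$ versus maximal chains with descent set contained in $T$) followed by M\"{o}bius inversion is exactly the standard proof of that cited result, transferred to the present setting. You also correctly supply the only facts specific to $\hat{R}_{n,l}$ that the transfer requires --- \ref{EL1} on every closed interval via \cref{thm:EL}, the absence of repeated labels along a maximal chain (so ``no descent'' forces strict increase), and the rank bookkeeping of \cref{rem:hvector_bounded} --- so nothing is missing.
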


\begin{example}\label{eg:labeling_to_hvector}
The maximal chains of $\hat{R}_{3,1}$ and their descent sets are shown in \cref{fig:R31_chains}. According to \cref{thm:labeling_to_hvector}, we have $\beta_\varnothing = 1$, $\beta_{\{1\}} = 2$, $\beta_{\{2\}} = 1$, and $\beta_{\{1,2\}} = 0$, consistent with \cref{eg:hvector}.
\end{example}

We also have the following explicit description of all the maximal chains of $R_{n,l}$ (and hence also $\hat{R}_{n,l}$):
\begin{proposition}\label{prop:maximalchains}
The number of maximal chains of $R_{n,l}$ is $f_{n-l-1} = \binom{n+l}{2l+1}(n-l-1)!$. Explicitly, given $A\in\binom{[n+l]}{2l+1}$ and a permutation $\pi\in\sym{n-l-1}$, we associate a maximal chain $C(A,\pi)$ of $R_{n,l}$ as follows:
\begin{itemize}
\item writing $A = \{c_1 < \cdots < c_{2l+1}\}$, we set $a_i := c_{2i-1} - i + 1$ for $1 \le i \le l+1$ and $b_i := c_{2i} - i$ for $1 \le i \le l$;
\item we take $C(A,\pi)$ to have minimal element $x := (\{a_1\}, \dots, \{a_{l+1}\})$ and maximal element $y := (\{1, \dots, b_1\}, \{b_1 + 1, \dots, b_2\}, \dots, \{b_l+1, \dots, n\})$;
\item writing $[n]\setminus\{a_1, \dots, a_{l+1}\} = \{a'_1, \dots, a'_{n-l-1}\}$ (in increasing order), $C(A,\pi)$ is given by adding the elements $a'_{\pi(1)}, \dots, a'_{\pi(n-l-1)}$ to $x$ (in that order), each to the appropriate block (determined by $y$).
\end{itemize}

\end{proposition}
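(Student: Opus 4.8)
The plan is to match the maximal chains of $R_{n,l}$ bijectively with the pairs $(A,\pi)$ appearing in the statement, via $C(A,\pi)$. Granting this, the count is immediate, since $\binom{[n+l]}{2l+1}$ has $\binom{n+l}{2l+1}$ elements and $\sym{n-l-1}$ has $(n-l-1)!$ elements; moreover $f_{n-l-1}$ really is the number of maximal chains, because (see \cref{rem:hvector_bounded}) $R_{n,l}$ is graded with ranks $1,\dots,n-l$, so maximal chains are exactly the chains meeting all $n-l$ ranks, which $f_{n-l-1}=\alpha_{[n-l]}$ counts.

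First I would describe the two ends of a maximal chain, using \cref{def:tuples} and \cref{lem:covers}. The minimal elements of $R_{n,l}$ are the tuples $(\{a_1\},\dots,\{a_{l+1}\})$ with $1\le a_1<\cdots<a_{l+1}\le n$ (a tuple with a block of size $\ge 2$ strictly dominates one obtained by deleting an element of that block), and the maximal elements are the tuples $(B_1,\dots,B_{l+1})$ with $B_1\cup\cdots\cup B_{l+1}=[n]$; such a tuple is automatically an interval partition $B_i=\{b_{i-1}+1,\dots,b_i\}$ for some $0=b_0<b_1<\cdots<b_l<b_{l+1}=n$, since exhausting $[n]$ together with $\max(B_i)<\min(B_{i+1})$ forces each $B_i$ to be an interval. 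One also checks that any non-maximal element is covered by something: if $a\notin B_1\cup\cdots\cup B_{l+1}$, then adjoining $a$ to the block $B_{i+1}$ with $i:=\#\{j:\max(B_j)<a\}$ produces a cover. Hence the top of a maximal chain is a maximal element and its bottom a minimal element, and since the rank is $n-l-1$, the chain is obtained from its bottom $x=(\{a_1\},\dots,\{a_{l+1}\})$ by adjoining, one at a time, the $n-l-1$ elements of $[n]\setminus\{a_1,\dots,a_{l+1}\}$, each into its unique target block in the top $y$.

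Next I would check that this sets up a bijection between maximal chains and triples $(x,y,\pi)$, where $x$ is minimal, $y$ is maximal, $x\le y$, and $\pi\in\sym{n-l-1}$ records the order of adjunctions. The one thing to verify is that, given such a triple, the construction stays inside $R_{n,l}$ at every step and every step is a cover relation. Writing $y=(B_1,\dots,B_{l+1})$ and noting that each intermediate block $A_i$ satisfies $A_i\subseteq B_i$, the block conditions hold because $\max(A_i)\le\max(B_i)<\min(B_{i+1})\le\min(A_{i+1})$, and adjoining $a\in B_i\setminus A_i$ to $A_i$ is a cover by \cref{lem:covers} because $\max(A_{i-1})\le\max(B_{i-1})<\min(B_i)\le a\le\max(B_i)<\min(B_{i+1})\le\min(A_{i+1})$. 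The inverse map simply reads off $x$, $y$, and $\pi$ from a chain. Thus the number of maximal chains equals $(n-l-1)!$ times the number of pairs $(x,y)$ with $x$ minimal, $y$ maximal, and $x\le y$.

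Finally I would count those pairs. With $b_0:=0$ and $b_{l+1}:=n$, the relation $x\le y$ says $a_i\in B_i$, i.e.\ $b_{i-1}<a_i\le b_i$ for all $i\in[l+1]$, so a pair is exactly a chain of integers
$$
1\le a_1\le b_1<a_2\le b_2<\cdots<a_l\le b_l<a_{l+1}\le n.
$$
Setting $c_{2i-1}:=a_i+(i-1)$ for $i\in[l+1]$ and $c_{2i}:=b_i+i$ for $i\in[l]$ converts the permitted equalities into strict inequalities, producing a strictly increasing sequence $c_1<c_2<\cdots<c_{2l+1}$ in $[n+l]$; this is a bijection onto $\binom{[n+l]}{2l+1}$ whose inverse is precisely the formula for $a_i$ and $b_i$ in the statement. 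Combining the steps, the maximal chains of $R_{n,l}$ biject with pairs $(A,\pi)$, $A\in\binom{[n+l]}{2l+1}$ and $\pi\in\sym{n-l-1}$, via $C(A,\pi)$, and their number is $\binom{n+l}{2l+1}(n-l-1)!$. I expect the main point needing care to be the verification in the third paragraph that adjoining the elements of $[n]\setminus\{a_1,\dots,a_{l+1}\}$ in an arbitrary order never leaves $R_{n,l}$ — the subtlety being that the intermediate blocks need not be intervals while the condition $\max(A_i)<\min(A_{i+1})$ must persist — together with the companion fact that a maximal element must exhaust $[n]$; the remaining counting is routine.
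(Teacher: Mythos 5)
Your argument is correct and takes essentially the same route as the paper, whose proof consists of the single line that one can verify $(A,\pi)\mapsto C(A,\pi)$ is a bijection; you have simply supplied that verification (identifying the ends of maximal chains, checking $C(A,\pi)$ stays in $R_{n,l}$ step by step, and encoding the pairs $(x,y)$ by the interleaving sequence $a_1\le b_1<a_2\le\cdots\le b_l<a_{l+1}$ mapped to a $(2l+1)$-subset of $[n+l]$). One minor slip: in your check that every non-maximal element is covered, when $a$ exceeds every element of $B_1\cup\cdots\cup B_{l+1}$ your index $i=l+1$ points to a nonexistent block $B_{l+2}$; in that case $a$ should be adjoined to $B_{l+1}$ itself (a type $\beta$ cover), after which the claim and the rest of the argument stand.
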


\begin{proof}
We can verify that the map $(A,\pi) \mapsto C(A,\pi)$ gives a bijection from $\binom{[n+l]}{2l+1}\times\sym{n-l-1}$ to the set of maximal chains of $R_{n,l}$.
\end{proof}

For example, the maximal chains in \cref{fig:R31_chains} are (from left to right) $C(\{1,2,3\},1)$, $C(\{1,2,4\},1)$, $C(\{1,3,4\},1)$, and $C(\{2,3,4\},1)$.

While \cref{prop:maximalchains} gives a simple description of the maximal chains of $\hat{R}_{n,l}$, we are not able in general to translate \cref{def:labels} into a simple description of the descents of $C(A,\pi)$ in terms of $A$ and $\pi$. However, in the special case $l=0$, we do have such a simple description: maximal chains correspond to permutations of $[n]$ with the usual notion of descent, as we now explain.
\begin{definition}\label{def:eulerian_numbers}
Given $\pi\in\sym{n}$, we say that $r\in [n-1]$ is a {\itshape descent} of $\pi$ if $\pi(r) > \pi(r+1)$. For $0 \le d \le n$, we define the {\itshape Eulerian number} $\eulerian{n}{d}$ as the number of permutations in $\sym{n}$ with exactly $d$ descents.
\end{definition}

For example, $\eulerian{3}{1} = 4$, corresponding to the permutations (in one-line notation) $132$, $213$, $231$, and $312$. We refer to \cite{petersen15} for further details about Eulerian numbers.
\begin{proposition}\label{prop:eulerian_case}
There is a bijection between maximal chains of $\hat{R}_{n,0}$ and permutations in $\sym{n}$ which preserves descent sets. In particular, by \cref{thm:labeling_to_hvector}, we have $h_d = \eulerian{n}{d}$ for $0 \le d \le n$.
\end{proposition}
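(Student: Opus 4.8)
The plan is to exhibit the bijection explicitly and then verify directly, using the total order on $\Lambda_{n,0}$ from \cref{def:labels}, that it preserves descent sets. A maximal chain of $\hat{R}_{n,0}$ has the form
\[
\hat{0} \lessdot \{a_1\} \lessdot \{a_1,a_2\} \lessdot \cdots \lessdot \{a_1,\dots,a_n\} \lessdot \hat{1},
\]
and records an order $a_1, a_2, \dots, a_n$ in which the elements of $[n]$ are added; I would send this chain to the permutation $\pi\in\sym{n}$ whose one-line notation is $a_1 a_2 \cdots a_n$. This is visibly a bijection, and on $R_{n,0}$ it coincides with the parametrization of \cref{prop:maximalchains} in the case $l=0$.

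Next I would record the labels along this chain. Specializing \cref{def:labels} to $l=0$ and writing $M_j := \max\{a_1,\dots,a_j\}$: the first edge $\hat{0}\lessdot\{a_1\}$ gets the label $\{a_1\}\in\binom{[n]}{1}$; the edge adding $a_j$ (for $2\le j\le n$) gets the label $(\alpha,1,a_j)$ if $a_j < M_{j-1}$ and $(\beta,1,a_j)$ if $a_j > M_{j-1}$; and the final edge $[n]\lessdot\hat{1}$ gets the label $(\beta,1,n+1)$. The heart of the argument is the claim that for each $i\in[n]$, the index $i$ is a descent of the chain (in the sense of \cref{def:chain_descent}) if and only if $a_i > a_{i+1}$, i.e.\ if and only if $i$ is a descent of $\pi$ (in the sense of \cref{def:eulerian_numbers}); note that $n$ is a descent of neither, since the final label $(\beta,1,n+1)$ is larger than both $(\alpha,1,a_n)$ and $(\beta,1,a_n)$. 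I would prove the claim by a short case analysis according to whether the edge adding $a_i$ and the edge adding $a_{i+1}$ are of type $\alpha$ or of type $\beta$: a descent $a_i > a_{i+1}$ forces the edge adding $a_{i+1}$ to be of type $\alpha$ (since $a_{i+1} < a_i \le M_i$), while the order relations $(\alpha,1,\ast)\prec I\prec(\beta,1,\ast)$ for $I\in\binom{[n]}{1}$, together with $(\alpha,1,a)\prec(\alpha,1,b)$ and $(\beta,1,a)\prec(\beta,1,b)$ for $a<b$, make the comparison of two consecutive labels agree with the comparison of the corresponding values $a_i$ and $a_{i+1}$ in every case. The edge $i=1$ (whose earlier label is the singleton $\{a_1\}$ rather than a triple) and the edge $i=n$ (whose later label is $(\beta,1,n+1)$) are handled separately, but both are immediate.

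I expect this case analysis to be the only real content, and it is entirely elementary. Once it is done, the descent-preserving bijection combined with \cref{thm:labeling_to_hvector} shows that for each $S\subseteq[n-l]=[n]$, $\beta_S$ equals the number of permutations $\pi\in\sym{n}$ with $\Des(\pi)=S$; summing over all $S$ of size $d$ then gives $h_d = \eulerian{n}{d}$ by \cref{def:eulerian_numbers}.
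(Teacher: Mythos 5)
Your proposal is correct and follows the same route as the paper: the paper's proof uses exactly this bijection $\pi \mapsto \hat{0}\lessdot\{\pi(1)\}\lessdot\{\pi(1),\pi(2)\}\lessdot\cdots\lessdot[n]\lessdot\hat{1}$ and simply asserts that the two notions of descent agree. Your case analysis (singleton labels lying between all $\alpha$-labels and all $\beta$-labels, a descent $a_i>a_{i+1}$ forcing type $\alpha$, the impossibility of $\beta$ followed by $\alpha$ in an ascent, and the final label $(\beta,1,n+1)$ never creating a descent) is exactly the verification the paper leaves to the reader, and it is carried out correctly.
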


\begin{proof}
The bijection sends the permutation $\pi\in\sym{n}$ to the maximal chain
$$
\hat{0} \lessdot (\{\pi(1)\}) \lessdot (\{\pi(1),\pi(2)\}) \lessdot \cdots \lessdot (\{\pi(1), \dots, \pi(n)\}) \lessdot \hat{1}.
$$
We can verify that the notions of descent in \cref{def:labels} and \cref{def:eulerian_numbers} agree.
\end{proof}

\subsection{Explicit formulas}\label{sec:face_numbers_formulas}

We now turn to giving explicit formulas for the $f$-vector and $h$-vector of $R_{n,l}$.
\begin{proposition}\label{prop:flagfvector}
The flag $f$-vector of $R_{n,l}$ is given by
$$
\alpha_S = \binom{l+r_1-1}{l}\binom{n}{l+r_d}\binom{2l+r_d}{r_d-r_1}\binom{r_d-r_1}{r_2-r_1, \dots, r_d-r_{d-1}}
$$
for all $S = \{r_1 < \cdots < r_d\}\subseteq [n-l]$.
\end{proposition}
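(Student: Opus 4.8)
The plan is to count, for a fixed $S = \{r_1 < \cdots < r_d\} \subseteq [n-l]$, the chains $x^{(1)} \lessdot \cdots$ — more precisely, the chains $y^{(1)} < y^{(2)} < \cdots < y^{(d)}$ in $R_{n,l}$ with $\rankfun{y^{(j)}} = r_j$ — by building such a chain from the bottom up. Recall from \cref{def:tuples} that an element of $R_{n,l}$ of rank $r$ is an $(l+1)$-tuple $(A_1, \dots, A_{l+1})$ of blocks with $\max(A_i) < \min(A_{i+1})$ and $\sum_i \abs{A_i} = r$. I would first observe that such a tuple is determined by two pieces of data: the total support $A_1 \cup \cdots \cup A_{l+1}$, an $r$-element subset of $[n]$, together with the choice of where the $l$ ``cuts'' between consecutive blocks fall. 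Since $\max(A_i) < \min(A_{i+1})$, the cuts must occur at $l$ of the $r-1$ ``gaps'' between consecutive elements of the support — no, more carefully: the cuts occur at positions among the $r$ support elements, with block $A_i$ being a nonempty prefix-segment, so the data is equivalent to a composition of $r$ into $l+1$ positive parts together with an $r$-subset of $[n]$. Hence the number of rank-$r$ elements is $\binom{n}{r}\binom{r-1}{l}$, which one can sanity-check gives $f_{r-1}$.

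The main step is to count the \emph{flag} of nested elements. Given $y^{(j)} \le y^{(j+1)}$, both in $R_{n,l}$, the containment is block-by-block: $A^{(j)}_i \subseteq A^{(j+1)}_i$ for each $i$. So I would encode a chain $y^{(1)} < \cdots < y^{(d)}$ as follows. The top element $y^{(d)}$ has support of size $r_d$, which I would write as a subset $T \in \binom{[n]}{l+r_d}$ after a standard ``stars and bars'' adjustment — here is the key reindexing trick: a rank-$r_d$ element of $R_{n,l}$ (an $r_d$-subset of $[n]$ cut into $l+1$ nonempty runs) corresponds bijectively to an $(l + r_d)$-subset of $[n]$ together with a choice of which $l$ of its elements serve as ``separators,'' and the constraint that each block is nonempty translates to the separators being interior, giving the $\binom{n}{l+r_d}$ and one of the binomial-coefficient factors. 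Then, working downward, passing from $y^{(j+1)}$ to $y^{(j)}$ amounts to deleting $r_{j+1} - r_j$ elements from the support of $y^{(j+1)}$, subject to keeping every block nonempty; but once the block structure of the \emph{bottom} element $y^{(1)}$ is pinned down (it must have all $l+1$ blocks nonempty, contributing the factor $\binom{l + r_1 - 1}{l}$), the remaining $r_d - r_1$ support elements of $y^{(d)}$ that lie outside the support of $y^{(1)}$ get ``added in'' across the $d-1$ steps in some order — except that, unlike in a maximal chain, we add a whole set of size $r_{j+1} - r_j$ at each step rather than one element. This is exactly an ordered set partition of an $(r_d - r_1)$-element set into blocks of sizes $r_2 - r_1, \dots, r_d - r_{d-1}$, which is the multinomial coefficient $\binom{r_d - r_1}{r_2 - r_1, \dots, r_d - r_{d-1}}$, and the remaining $\binom{2l + r_d}{r_d - r_1}$ factor records which of the support elements of $y^{(d)}$ (in the padded $(l+r_d)$-element model, minus the $l + r_1$ already used for $y^{(1)}$ and its separators) are the ones being added.

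Concretely, I would set up an explicit bijection $\Phi$ from chains with support set $S$ to tuples
$$
\Big(\text{block sizes of } y^{(1)},\ \text{padded support of } y^{(d)},\ \text{which elements are "new",}\ \text{order of addition}\Big),
$$
verify it is well-defined (the block-nonemptiness of all intermediate $y^{(j)}$ follows from that of $y^{(1)}$, since supports only grow), verify injectivity and surjectivity, and read off the product formula from the four independent choices. The cleanest way to organize the padding is probably to fix once and for all the classical bijection between $\{(B_1, \dots, B_{l+1}) : B_i \text{ nonempty}, \max B_i < \min B_{i+1}, \bigcup B_i \in \binom{[n]}{r}\}$ and $\binom{[n]}{l+r}$ (send $(B_1, \dots, B_{l+1})$ to $B_1 \cup (B_2 + 1) \cup \cdots \cup (B_{l+1} + l)$, shifting block $B_i$ up by $i-1$), and then run the whole chain-counting argument in the ``un-cut'' world $\binom{[n]}{\bullet}$ where containment of $R_{n,l}$-elements becomes ordinary containment of subsets plus a compatibility condition on the separators.

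I expect the main obstacle to be bookkeeping: making the padding/un-cutting bijection interact correctly with the \emph{chain} (not just a single element), so that the separator positions of $y^{(1)}, \dots, y^{(d)}$ are consistent and the ``new elements at step $j$'' are counted with the right index ranges to produce exactly $\binom{2l+r_d}{r_d-r_1}$ rather than some off-by-$l$ variant. Once the single-element padding bijection is stated carefully and one checks that it is monotone in the appropriate sense, the count of how the new support elements are distributed among the $d-1$ steps — the multinomial factor — and the count of the fixed bottom block-structure — the $\binom{l+r_1-1}{l}$ factor — should both be routine, and the two ``global'' factors $\binom{n}{l+r_d}$ and $\binom{2l+r_d}{r_d-r_1}$ fall out of the padded model. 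I would then double-check the formula in small cases (e.g.\ recover $\alpha_{\{1\}} = \binom{l}{l}\binom{n}{l+1}\binom{2l+1}{0}\binom{0}{} = \binom{n}{l+1} = f_0$, and recover the data of \cref{eg:hvector} for $R_{3,1}$) before declaring victory.
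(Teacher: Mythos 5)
Your overall decomposition is the same as the paper's: choose the support of the top element ($\binom{n}{l+r_d}$ ways), the block sizes of the bottom element ($\binom{l+r_1-1}{l}$ ways), the relative positions of the bottom element and the top element's block boundaries inside the top support (the factor $\binom{2l+r_d}{r_d-r_1}$), and finally a set composition with parts $r_2-r_1,\dots,r_d-r_{d-1}$ distributing the new elements among the intermediate ranks. However, there are concrete problems in the execution. First, a rank-convention error runs through the write-up: with ranks labeled $1,\dots,n-l$ as in \cref{rem:hvector_bounded} (the convention under which the stated formula holds), a rank-$r$ element has support of size $l+r$, not $r$; compare \cref{cor:whitney_numbers}. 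So your counts ``the number of rank-$r$ elements is $\binom{n}{r}\binom{r-1}{l}$'' and ``$y^{(d)}$ has support of size $r_d$'' are off by $l$, and your ``key reindexing trick'' --- that an $r_d$-subset of $[n]$ cut into $l+1$ nonempty runs corresponds bijectively to an $(l+r_d)$-subset of $[n]$ with $l$ marked separators --- is false as stated (the two sides have different cardinalities). The correct statement is that a rank-$r_d$ element is its support, an $(l+r_d)$-subset, together with its $l$ block maxima, i.e.\ $l$ marked non-maximal support elements; and in a chain this separator choice is \emph{not} an independent factor, because it is constrained by the bottom element.

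Second, and more seriously, the factor $\binom{2l+r_d}{r_d-r_1}$ --- which you yourself flag as the main obstacle --- is never derived, and the device you propose for deriving it fails: the map $(B_1,\dots,B_{l+1})\mapsto B_1\cup(B_2+1)\cup\cdots\cup(B_{l+1}+l)$ is not injective (in $R_{4,1}$, both $(\{1\},\{2,4\})$ and $(\{1,3\},\{4\})$ are sent to $\{1,3,5\}$), and it lands in subsets of $[n+l]$ of size equal to the support, not in $\binom{[n]}{l+r}$; so there is no ``un-cut world'' in which containment in $R_{n,l}$ becomes plain containment of subsets with the separator data forgotten. In the paper this step is done by fixing the top support to be $[l+r_d]$ and choosing \emph{jointly} the bottom support elements $a_{i,1}<\cdots<a_{i,s_i}$ and the top block maxima $b_i$, subject to $a_{1,1}<\cdots<a_{1,s_1}\le b_1<a_{2,1}<\cdots\le b_l<a_{l+1,1}<\cdots<a_{l+1,s_{l+1}}$; this is a sequence of $2l+r_1$ values in $[l+r_d]$ with exactly $l$ weak inequalities, counted by $\binom{(l+r_d)+l}{2l+r_1}=\binom{2l+r_d}{r_d-r_1}$, and it is precisely here that the top's block structure and its compatibility with the bottom are handled (your proposal never identifies the ground set of size $2l+r_d$ from which the ``new'' elements are chosen). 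Your remaining two factors --- the composition of $l+r_1$ into $l+1$ positive parts and the multinomial recording the order of addition --- are correct and routine, but without a correct replacement for the padding step the central factor remains unproved.
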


\begin{proof}
We enumerate the chains $x_1 < \cdots < x_d$ of $R_{n,l}$ supported at ranks $r_1, \dots, r_d$ as follows. Write $x_1 = (A_1, \dots, A_{l+1})$ and $x_d = (B_1, \dots, B_{l+1})$. Since $|B_1 \cup \cdots \cup B_{l+1}| = l+r_d$, the number of ways to choose $B_1 \cup \cdots \cup B_{l+1}$ is
$$
\binom{n}{l+r_d}.
$$
After relabeling the set $[n]$, we may assume that $B_1 \cup \cdots \cup B_{l+1} = [l+r_d]$.

Let $s_i$ denote the size of $A_i$ (for $1 \le i \le l+1$), so that $s_i \ge 1$ and $s_1 + \cdots + s_{l+1} = l+r_1$. The number of ways to choose $s_1, \dots, s_{l+1}$ is
$$
\binom{l+r_1-1}{l}.
$$

For $1 \le i \le l+1$, write $A_i = \{a_{i,1} < \cdots < a_{i,s_i}\}$, and set $b_i := \max(B_i)$. Then the $a_{i,j}$'s and $b_i$'s are arbitrary elements of $[l+r_d]$ subject to
$$
a_{1,1} < \cdots < a_{1,s_1} \le b_1 < a_{2,1} < \cdots < a_{2,s_2} \le b_2 < \cdots \le b_l < a_{l+1,1} < \cdots < a_{l+1,s_{l+1}}.
$$
The number of ways to choose the $a_{i,j}$'s and $b_i$'s is
$$
\binom{2l+r_d}{2l+r_1} = \binom{2l+r_d}{r_d-r_1},
$$
at which point $x_1$ and $x_d$ are fixed.

Finally, the elements $x_2, \dots, x_{d-1}$ are determined by a set composition of $(B_1 \cup \cdots \cup B_{l+1})\setminus (A_1 \cup \cdots \cup A_{l+1})$ with blocks of respective sizes $r_2-r_1, \dots, r_d-r_{d-1}$. The number of choices is
\begin{gather*}
\binom{r_d-r_1}{r_2-r_1, \dots, r_d-r_{d-1}}.\qedhere
\end{gather*}

\end{proof}

We now use \cref{prop:flagfvector} to give a formula for the $f$-vector of $R_{n,l}$. The following formula allows us to simplify the resulting sum, at the cost of introducing minus signs.
\begin{lemma}[{\cite[(1.94a)]{stanley12}}]\label{lem:incexc}
Let $s\in\mathbb{N}$ and $d\in\mathbb{Z}_{>0}$. Then
\begin{gather*}
\sum_{\substack{i_1, \dots, i_d \ge 1, \\[1pt] i_1 + \cdots + i_d = s}}\binom{s}{i_1, \dots, i_d} = \sum_{i=0}^d(-1)^i\binom{d}{i}(d-i)^s.
\end{gather*}

\end{lemma}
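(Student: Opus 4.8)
The plan is to recognize both sides of the identity as counting the same objects: the surjective functions $f\colon [s]\to[d]$, equivalently the ways to write $[s]$ as an ordered sequence $(P_1,\dots,P_d)$ of $d$ nonempty blocks.

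For the left-hand side, I would group the surjections according to the tuple of fiber sizes $(i_1,\dots,i_d) := (|f^{-1}(1)|,\dots,|f^{-1}(d)|)$. Each $i_j$ is a positive integer with $i_1+\cdots+i_d = s$, and the number of surjections with a prescribed fiber-size tuple is the multinomial coefficient $\binom{s}{i_1,\dots,i_d}$ (choose which $i_1$ elements map to $1$, which $i_2$ of the remaining elements map to $2$, and so on). Summing over all admissible tuples gives exactly the left-hand side.

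For the right-hand side, I would start from the set of all $d^s$ functions $[s]\to[d]$ and apply inclusion--exclusion over the missed values. For $j\in[d]$ let $S_j$ be the set of functions whose image omits $j$; a function is a surjection precisely when it lies in none of the $S_j$. For any $i$-element subset $J\subseteq[d]$ we have $|\bigcap_{j\in J}S_j| = (d-i)^s$, since these are exactly the functions $[s]\to[d]\setminus J$. Hence the number of surjections equals $\sum_{i=0}^d(-1)^i\binom{d}{i}(d-i)^s$, the right-hand side. (Here $0^0 := 1$, so the $i=d$ term contributes $(-1)^d$; when $s=0$ the sum is $\sum_{i=0}^d(-1)^i\binom{d}{i}=0$, matching the empty left-hand side since $d\ge 1$.)

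No step here is a genuine obstacle — the identity is the classical formula for the number of surjections (equivalently $d!$ times a Stirling number of the second kind). An alternative one-line argument uses exponential generating functions: the left-hand side is $s!$ times the coefficient of $x^s$ in $(e^x-1)^d$, while expanding $(e^x-1)^d = \sum_{i=0}^d(-1)^i\binom{d}{i}e^{(d-i)x}$ and extracting the coefficient of $x^s/s!$ yields the right-hand side directly.
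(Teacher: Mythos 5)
Your proof is correct and takes essentially the same route as the paper: the paper applies exactly this inclusion--exclusion (over which parts $i_j$ are forced to vanish, i.e.\ which values of $[d]$ are missed) directly to the constrained multinomial sum, evaluating the relaxed inner sums as $(d-|I|)^s$ via the multinomial theorem. Your surjection-counting phrasing is just the combinatorial reading of that same computation, so nothing further is needed.
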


\begin{proof}
This follows from \cite[(1.94a)]{stanley12}, since both sides equal $d!\hspace*{1pt}S(s,d)$, where $S(s,d)$ is a Stirling number of the second kind. Alternatively, we can prove this directly from the inclusion-exclusion principle \cite[Theorem 2.1.1]{stanley12}.
\end{proof}

\begin{corollary}\label{cor:fvectornonpositive}
Let $0 \le l < n$ and $0 \le d \le n-l-1$.
The number of chains of $R_{n,l}$ of length $d$ which begin at rank $r$ and end at rank $r+s$ equals
$$
\sum_{i=0}^d(-1)^i\binom{d}{i}\binom{l+r-1}{l}\binom{n}{l+r+s}\binom{2l+r+s}{s}(d-i)^s.
$$
Then $f_d$ is given by summing the quantity above over all $r\ge 1$ and $s\ge 0$ (or alternatively $s\ge d$).
\end{corollary}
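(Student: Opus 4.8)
The plan is to obtain both assertions directly from \cref{prop:flagfvector}, by summing the flag $f$-vector over the relevant supports and collapsing the resulting sums of multinomial coefficients with \cref{lem:incexc}. Throughout, a chain of \emph{length $d$} means a chain $x_0 < x_1 < \cdots < x_d$ with $d+1$ elements, and its support is the $(d+1)$-element set of ranks $S = \{r_0 < r_1 < \cdots < r_d\}\subseteq [n-l]$ occupied by its elements; the chain begins at rank $r_0$ and ends at rank $r_d$.

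First I would count the chains of length $d$ that begin at rank $r$ and end at rank $r+s$. Every such chain has support $S = \{r = r_0 < r_1 < \cdots < r_{d-1} < r_d = r+s\}$ for some choice of intermediate ranks, and conversely each such $S$ occurs exactly once. Applying \cref{prop:flagfvector} to the $(d+1)$-element set $S$ (i.e.\ with the ``$d$'' there replaced by $d+1$ and indices shifted down by one) and using $r_d - r_0 = s$, one gets
$$
\alpha_S = \binom{l+r-1}{l}\binom{n}{l+r+s}\binom{2l+r+s}{s}\binom{s}{r_1 - r_0,\, r_2 - r_1,\, \dots,\, r_d - r_{d-1}},
$$
whose first three factors do not depend on the intermediate ranks. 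Summing over the admissible intermediate ranks $r_1 < \cdots < r_{d-1}$ is exactly summing $\binom{s}{i_1, \dots, i_d}$ over all compositions $i_1 + \cdots + i_d = s$ with every $i_j \ge 1$ (where $i_j := r_j - r_{j-1}$), which by \cref{lem:incexc} equals $\sum_{i=0}^d(-1)^i\binom{d}{i}(d-i)^s$; multiplying by the three rank-dependent factors yields the displayed formula. The degenerate case $d = 0$ reads $\alpha_{\{r\}} = \binom{l+r-1}{l}\binom{n}{l+r}$, which is correct under the conventions that an empty multinomial coefficient and $0^0$ both equal $1$.

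For the formula for $f_d$, I would start from $f_d = \sum_{S \in \binom{[n-l]}{d+1}}\alpha_S$ (see \cref{def:hvector}) and group the sum according to $r := \min(S)$ and $s := \max(S) - \min(S)$; by the previous step the contribution of each pair $(r,s)$ is precisely the quantity displayed in the statement, and $r \ge 1$ because $S\subseteq [n-l]$. One may let $r$ range over all integers $\ge 1$ and $s$ over all integers $\ge 0$, since the factor $\binom{n}{l+r+s}$ vanishes once $r+s > n-l$, so that only finitely many terms are nonzero. Finally, whenever $0 \le s < d$ one has $\sum_{i=0}^d(-1)^i\binom{d}{i}(d-i)^s = 0$ (the $d$th finite difference of a polynomial of degree less than $d$ vanishes; equivalently, no chain of length $d$ can span fewer than $d$ ranks), so the terms with $s < d$ contribute nothing and the range may be narrowed to $s \ge d$.

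The whole argument is short, and I do not anticipate a genuine obstacle: the only point requiring care is the index bookkeeping, namely keeping straight the distinction between the length of a chain and the number of ranks in its support so that \cref{lem:incexc} is invoked with exactly $d$ parts, together with the vanishing identity for $s < d$ that legitimizes the alternative range of summation.
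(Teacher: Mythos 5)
Your proof is correct and is exactly the argument the paper intends: apply \cref{prop:flagfvector} to a $(d+1)$-element support with fixed minimum $r$ and maximum $r+s$, sum the multinomial coefficient over the intermediate ranks via \cref{lem:incexc}, and then sum over $(r,s)$ to get $f_d$, with the $s\ge d$ refinement justified by the vanishing of the $d$th finite difference. The paper's own proof is just the one-line citation of these two results, so your write-up is a faithful (and more detailed) version of the same route.
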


\begin{proof}
This follows from \cref{prop:flagfvector}, using \cref{lem:incexc}.
\end{proof}

\begin{example}
Taking $d=0$ in \cref{cor:fvectornonpositive}, we obtain the number of elements of $R_{n,l}$:
\begin{gather*}
f_0 = \sum_{r=1}^{n-l}\binom{l+r-1}{l}\binom{n}{l+r}.
\end{gather*}

\end{example}

Finally, we use \cref{cor:fvectornonpositive} to obtain the generating functions for the $f$- and $h$-vectors:
\begin{theorem}\label{thm:FH}
The generating functions for the $f$- and $h$-vectors of $R_{n,l}$ are given by
$$
F(t) = 1 + \sum_{j,r,s\ge 0}\binom{l+r}{l}\binom{n}{l+r+s+1}\binom{2l+r+s+1}{s}j^s\left(\frac{t}{1+t}\right)^{j+1}
$$
and
\begin{gather*}
H(t) = (1-t)^{n-l}\left(1 + \sum_{j,r,s\ge 0}\binom{l+r}{l}\binom{n}{l+r+s+1}\binom{2l+r+s+1}{s}j^st^{j+1}\right).
\end{gather*}

\end{theorem}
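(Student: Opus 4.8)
The plan is to assemble $F(t)$ directly from \cref{cor:fvectornonpositive} and then deduce $H(t)$ from the $f$-to-$h$ conversion \eqref{eq:FtoH}. Recall that, with the ranks of $R_{n,l}$ labelled $1$ through $n-l$, we have $F(t) = \sum_{i=0}^{n-l} f_{i-1}t^i = 1 + \sum_{d\ge 0} f_d\,t^{d+1}$, where the constant term records the empty chain $f_{-1} = 1$ and $f_d$ counts chains of length $d$ in $R_{n,l}$. Every sum below is effectively finite: $\binom{n}{l+r+s+1}$ vanishes unless $r+s \le n-l-1$, and $f_d = 0$ for $d \ge n-l$, so extending summation ranges to infinity and interchanging sums causes no difficulty, and the identity may be read as one of formal power series in $t$ (the factors $\frac{1}{1+t}$ and $\frac{t}{1+t}$ being legitimate elements of $\mathbb{R}[[t]]$).

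Substituting the expression of \cref{cor:fvectornonpositive} for $f_d$ into $F(t)$ and interchanging the sums, the task reduces to evaluating, for each fixed $s\ge 0$, the sum $\sum_{d\ge 0} t^{d+1}\sum_{i=0}^d (-1)^i\binom{d}{i}(d-i)^s$. For this I would put $j := d-i$, rewriting the inner sum as $\sum_{j=0}^d (-1)^{d-j}\binom{d}{j}j^s$ (here the convention $0^0 = 1$ is what makes the $d = s = 0$ term come out correctly), swap the $d$- and $j$-summations, and set $m := d-j$. This gives $\sum_{j\ge 0} j^s t^{j+1}\sum_{m\ge 0}\binom{j+m}{j}(-t)^m$, and the inner sum is the negative binomial series $(1+t)^{-(j+1)}$, so the whole expression collapses to $\sum_{j\ge 0} j^s\bigl(\tfrac{t}{1+t}\bigr)^{j+1}$. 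Reindexing $r\mapsto r+1$ in the surviving sum then yields exactly the claimed formula for $F(t)$.

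For $H(t)$, \eqref{eq:FtoH} gives $H(t) = (1-t)^{n-l}F\!\bigl(\tfrac{t}{1-t}\bigr)$ since $R_{n,l}$ has $n-l$ ranks, and the one thing to notice is the simplification
$$\frac{t/(1-t)}{1 + t/(1-t)} = t,$$
so that the substitution $u = t/(1-t)$ turns each factor $\bigl(\tfrac{u}{1+u}\bigr)^{j+1}$ into $t^{j+1}$; multiplying by $(1-t)^{n-l}$ then gives the stated formula for $H(t)$.

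I do not expect a substantive obstacle here — the content is the two summation identities ($\sum_{m\ge0}\binom{j+m}{j}(-t)^m = (1+t)^{-(j+1)}$ and the $u/(1+u)$ simplification), and the rest is bookkeeping: tracking the reindexing of $r$ and the $0^0 = 1$ convention. As a check, for $n = 3$, $l = 1$ the formulas produce $F(t) = 1 + 5t + 4t^2$ and $H(t) = 1+3t$, matching \cref{eg:hvector}.
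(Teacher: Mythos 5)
Your proposal is correct and follows essentially the same route as the paper: substitute the formula of \cref{cor:fvectornonpositive} into $F(t)$, reindex (your $m$ is the paper's $i$ after writing $d=i+j$), collapse the alternating sum via the negative binomial series $\sum_{m\ge 0}\binom{j+m}{j}(-t)^m=(1+t)^{-(j+1)}$, and then obtain $H(t)$ from \eqref{eq:FtoH} using $\frac{t/(1-t)}{1+t/(1-t)}=t$. No substantive differences.
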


We then obtain an explicit formula (albeit with negative signs) for $h_i$ by taking the coefficient of $t^i$ in $H(t)$.
\begin{proof}
By \cref{cor:fvectornonpositive} (replacing $r-1$ by $r$), and then writing $d=i+j$ and applying the negative binomial theorem, we obtain
\begin{align*}
F(t) &= 1 + \sum_{d\ge 0}\sum_{i,r,s\ge 0}(-1)^i\binom{d}{i}\binom{l+r}{l}\binom{n}{l+r+s+1}\binom{2l+r+s+1}{s}(d-i)^st^{d+1} \\[2pt]
&= 1 + \sum_{i,j,r,s\ge 0}(-1)^i\binom{i+j}{i}\binom{l+r}{l}\binom{n}{l+r+s+1}\binom{2l+r+s+1}{s}j^st^{i+j+1} \\[2pt]
&= 1 + \sum_{j,r,s\ge 0}\binom{l+r}{l}\binom{n}{l+r+s+1}\binom{2l+r+s+1}{s}j^st^{j+1}(1+t)^{-(j+1)}.
\end{align*}
This proves the first equation. The second equation follows by applying \eqref{eq:FtoH}. 
\end{proof}

\begin{example}\label{eg:hvector0}
Let us set $l=0$ in \cref{thm:FH} to obtain the generating function for the $h$-vector of $R_{n,0}$:
\begin{align*}
H(t) &= (1-t)^n\left(1 + \sum_{j,r,s\ge 0}\binom{n}{r+s+1}\binom{r+s+1}{s}j^st^{j+1}\right) \\
&= (1-t)^n\left(1 + \sum_{j,r,s\ge 0}\binom{n}{s}\binom{n-s}{r+1}j^st^{j+1}\right) \\
&= (1-t)^n\left(1 + \sum_{j,s\ge 0}\binom{n}{s}(2^{n-s}-1)j^st^{j+1}\right) \\
&= (1-t)^n\left(1 + \sum_{j\ge 0}\big((j+2)^n - (j+1)^n\big)t^{j+1}\right) \\
&= (1-t)^{n+1}\sum_{j\ge 0}(j+1)^nt^j,
\end{align*}
where we applied the binomial theorem twice. This yields a well-known generating function for the Eulerian numbers \cite[(1.10)]{petersen15}, in agreement with \cref{prop:eulerian_case}.
\end{example}

\begin{example}\label{eg:h1}
Let us use \cref{thm:FH} to find $h_1$ for $R_{n,l}$, by taking the coefficient of $t$ in $H(t)$:
$$
h_1 = l-n + \sum_{r\ge 0}\binom{l+r}{l}\binom{n}{l+r+1}.
$$
We can compute the latter sum using the identity
\begin{align*}
\sum_{r\ge 0}\binom{l+r}{l}\binom{n}{l+r+1}u^r &= \frac{1}{l!}\frac{d^l}{du^l}\left(\frac{(1+u)^n - 1}{u}\right) \\
&= (-1)^{l+1}u^{-(l+1)} + \sum_{i=0}^l(-1)^{l-i}\binom{n}{i}(1+u)^{n-i}u^{-(l-i+1)};
\end{align*}
the first equality above follows from the binomial theorem, and the second equality follows from the product rule for the derivative. Setting $u=1$ gives
$$
h_1 = l-n + (-1)^{l+1} + \sum_{i=0}^l(-1)^{l-i}\binom{n}{i}2^{n-i}.
$$
For example, when $l=0$ we obtain $\eulerian{n}{1} = h_1 = 2^n - n - 1$.
\end{example}

\section{Normalized flow}\label{sec:sperner}

\noindent In this section we prove \cref{thm:sperner_intro}, which states that $R_{n,l}$ is rank-log-concave and strongly Sperner. We prove the former in \cref{cor:whitney_numbers}, and the latter in \cref{thm:flow} using Harper's notion of a {\itshape normalized flow} \cite{harper74}.

\subsection{Background}\label{sec:background_sperner}

We provide some background on unimodal and log-concave sequences, the strongly Sperner property, and normalized flows, following \cite{stanley12,engel97,harper74}.
\begin{definition}\label{def:log_concave}
Let $s = (s_1, \dots, s_d)$ be a sequence of nonnegative real numbers. We say that $s$ is {\itshape unimodal} if for some $1 \le j \le d$, we have
$$
s_1 \le \cdots \le s_{j-1} \le s_j \ge s_{j+1} \ge \cdots \ge s_d.
$$
We say that $s$ is {\itshape log-concave} if
$$
s_{i-1}s_{i+1} \le s_i^2 \quad \text{ for } 2 \le i \le d-1.
$$

One can verify that if $s$ is a log-concave sequence of nonnegative real numbers and has no internal zeros, then $s$ is unimodal. We also observe that the entry-wise product of two log-concave sequences is log-concave.
\end{definition}

\begin{definition}\label{def:whitney}
Let $P$ be a finite graded poset of rank $d-1$, with ranks labeled from $1$ to $d$. For $1 \le r \le d$, the {\itshape $r$th Whitney number of the second kind} $W_r$ is defined to be the number of elements of $P$ of rank $r$. In terms of the flag $f$-vector, we have $W_r = \alpha_{\{r\}}$. We say that $P$ is {\itshape rank-unimodal} (respectively, {\itshape rank-log-concave}) if the sequence $(W_1, \dots, W_r)$ is unimodal (respectively, log-concave). We observe that if $P$ is rank-log-concave, then it is rank-unimodal.
\end{definition}

For example, from \cref{fig:R31} we see that for $R_{3,1}$, we have $(W_1, W_2) = (3,2)$. For general $R_{n,l}$, we can read off $W_r$ from \cref{prop:flagfvector}:
\begin{proposition}\label{cor:whitney_numbers}
The Whitney numbers of the second kind of $R_{n,l}$ (with ranks labeled from $1$ to $n-l$) are
$$
W_r = \binom{l+r-1}{l}\binom{n}{l+r} \quad \text{ for } 1 \le r \le n-l.
$$
In particular, $R_{n,l}$ is rank-log-concave.
\end{proposition}

\begin{proof}
The formula for $W_r$ follows by taking $S = \{r\}$ in \cref{prop:flagfvector}. The sequence $(W_1, \dots, W_{n-l})$ is log-concave because it is the entry-wise product of the log-concave sequences
\begin{gather*}
\bigg(\binom{l+r-1}{l}\bigg)_{r=1}^{n-l} \quad \text{ and } \quad \bigg(\binom{n}{l+r}\bigg)_{r=1}^{n-l}.\qedhere
\end{gather*}

\end{proof}

We now introduce the (strongly) Sperner property and normalized flows. Recall that an {\itshape antichain} in a poset is a subset of pairwise incomparable elements.
\begin{definition}\label{def:sperner}
Let $P$ be a finite graded poset of rank $d-1$, with ranks labeled from $1$ to $d$. Given $j \ge 1$, we say that $P$ is {\itshape $j$-Sperner} if the maximum size of a union of $j$ antichains is realized by taking the $j$ largest ranks, i.e.,
$$
|A_1 \cup \cdots \cup A_j| \le \max_{1 \le r_1 < \cdots < r_j \le d}W_{r_1} + \cdots + W_{r_j} \quad \text{ for all antichains } A_1, \dots, A_j \subseteq P.
$$
We say that $P$ is {\itshape Sperner}\footnote{The term is so named because Sperner showed that the Boolean algebra $\bool{n}$ has the Sperner property \cite{sperner28}. In fact, $\bool{n}$ is strongly Sperner (cf.\ \cite[Example 4.6.2]{engel97}).} if $P$ is $1$-Sperner, and we say that $P$ is {\itshape strongly Sperner} if $P$ is $j$-Sperner for all $j\ge 1$.
\end{definition}

\begin{definition}[{\cite{harper74}; \cite[p.\ 150]{engel97}}]\label{def:flow}
Let $P$ be a finite graded poset of rank $d-1$, with ranks labeled from $1$ to $d$. A {\itshape normalized flow} is an edge labeling $f$ (of the edges of the Hasse diagram of $P$) taking values in $\R_{\ge 0}$, such that the following conditions hold for $1 \le r \le d-1$:\footnote{The original definition also requires that the sum of $f(x\lessdot y)$ over all cover relations $x\lessdot y$ between ranks $r$ and $r+1$ equals $1$. Given an $f$ satisfying \ref{NF1} and \ref{NF2}, we can achieve this additional constraint by rescaling all such $f(x\lessdot y)$ by the same appropriate positive constant (depending on $r$).}\vspace*{2pt}
\begin{enumerate}[label={(NF\arabic*)},leftmargin=48pt,itemsep=5pt]
\item\label{NF1} $\displaystyle\sum_{y,\, x\lessdot y}f(x\lessdot y)$ is the same positive number for all $x\in P$ of rank $r$; and
\item\label{NF2} $\displaystyle\sum_{x,\, x\lessdot y}f(x\lessdot y)$ is the same positive number for all $y\in P$ of rank $r+1$.
\end{enumerate}

\end{definition}

Harper \cite[Theorem p.\ 55]{harper74} showed that if $P$ admits a normalized flow, then $P$ is strongly Sperner. In fact, it follows from work of Kleitman \cite{kleitman74} (see \cite[Theorem 4.5.1]{engel97}) that such a $P$ satisfies the stronger {\itshape LYM inequality}:
$$
\sum_{x\in A}\frac{1}{W_{\rankfun{x}}} \le 1 \quad \text{ for all antichains } A.
$$

\subsection{Construction of the normalized flow}\label{sec:flow_construction}

We define a normalized flow on $R_{n,l}$. Our definition will manifestly satisfy \ref{NF1}, and we will then check carefully that \ref{NF2} holds.
\begin{definition}\label{def:flow_construction}
Let $0 \le l < n$. We define an edge labeling $f$ on $R_{n,l}$, with label set $\mathbb{R}_{\ge 0}$, as follows. Let $x = (A_1, \dots, A_{l+1})\in R_{n,l}$, and let $a\in [n]\setminus(A_1\cup\cdots\cup A_{l+1})$. Consider all elements $y\gtrdot x$ obtained from $x$ by adding $a$ to some block; there are exactly $1$ or $2$ such $y$. There is a unique such $y$ if and only if $a < \max(A_1)$ or $a > \min(A_{l+1})$, in which case, we set
$$
f(x\lessdot y) := 1.
$$
Otherwise, we have $\max(A_i) < a < \min(A_{i+1})$ for some $1 \le i \le l$. We can add $a$ either to the $i$th block or to the $(i+1)$th block, forming, say, $y_1$ and $y_2$, respectively. We then set
$$
f(x\lessdot y_1) := \frac{|A_1\cup\cdots\cup A_i|}{|A_1\cup\cdots\cup A_{l+1}|} \quad \text{ and } \quad f(x\lessdot y_2) := \frac{|A_{i+1}\cup\cdots\cup A_{l+1}|}{|A_1\cup\cdots\cup A_{l+1}|}.
$$
Note that in either case, given $x$ and $a$, the sum of $f(x\lessdot y)$ over all $y$ obtained from $x$ by adding $a$ to some block equals $1$.
For example, see \cref{fig:R31_flow} and \cref{fig:R92_flow}.
\end{definition}

\begin{theorem}\label{thm:flow}
The edge labeling of $R_{n,l}$ in \cref{def:flow_construction} is a normalized flow. In particular, $R_{n,l}$ is strongly Sperner.
\end{theorem}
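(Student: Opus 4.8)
The plan is to verify conditions \ref{NF1} and \ref{NF2} of \cref{def:flow} directly, with \ref{NF1} immediate and \ref{NF2} requiring a short computation. For \ref{NF1}: if $x = (A_1, \dots, A_{l+1})$ has rank $r$, then $|A_1 \cup \cdots \cup A_{l+1}| = l+r$, so there are exactly $n - l - r$ elements $a \in [n]$ lying outside all the blocks, and by construction the $f$-values of the one or two cover relations $x \lessdot y$ obtained by inserting a fixed such $a$ sum to $1$. Hence $\sum_{y \,\gtrdot\, x} f(x \lessdot y) = n - l - r$, which depends only on $r$ and is positive whenever $x$ is not of top rank.

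For \ref{NF2}, I would fix $y = (B_1, \dots, B_{l+1})$ of rank $r+1$, set $m_j := |B_j|$, $N := |B_1 \cup \cdots \cup B_{l+1}| = l+r+1$, and $N_j := m_1 + \cdots + m_j$ (so $N_0 = 0$ and $N_{l+1} = N$). Every $x \lessdot y$ arises by deleting a single element $b$ from some block $B_j$ with $m_j \ge 2$, and $f(x \lessdot y)$ is the weight that \cref{def:flow_construction} assigns to re-inserting $b$ into block $j$ of $x$. Using the order constraints defining $R_{n,l}$, I would check that re-inserting an \emph{interior} element of $B_j$ (one that is neither its minimum nor its maximum) can only be done in block $j$, so the cover relation is ``single'' and $f(x \lessdot y) = 1$; that re-inserting $b = \min B_j$ with $j \ge 2$ is a ``double'' cover relation, with $b$ fitting strictly between blocks $j-1$ and $j$, giving $f(x \lessdot y) = (N - N_{j-1} - 1)/(N-1)$; and, symmetrically, that re-inserting $b = \max B_j$ with $j \le l$ gives $f(x \lessdot y) = (N_j - 1)/(N-1)$. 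The boundary subcases $j = 1$ for $\min$ and $j = l+1$ for $\max$ are in fact ``single'' cover relations of weight $1$ — but, crucially, the two displayed formulas already evaluate to $1$ there since $N_0 = 0$ and $N_{l+1} = N$, so no separate bookkeeping is needed.

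Granting this, the sum of the weights of all $x \lessdot y$ coming from a fixed block $B_j$ with $m_j \ge 2$ is
\[
(m_j - 2) + \frac{N - N_{j-1} - 1}{N-1} + \frac{N_j - 1}{N-1} = (m_j - 1)\,\frac{N}{N-1},
\]
using $N_j - N_{j-1} = m_j$; this also holds trivially for $m_j = 1$, where the left side should be read as $0$. Summing over $j \in [l+1]$ and using $\sum_j m_j = N$ and $N - (l+1) = r$ gives $\sum_{x \,\lessdot\, y} f(x \lessdot y) = \frac{N}{N-1}\big(N - l - 1\big) = \frac{rN}{N-1}$, which depends only on the rank $r+1$ of $y$ and is positive. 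This is exactly \ref{NF2}. As a consistency check, multiplying the \ref{NF1} value $n-l-r$ by $W_r$ and the \ref{NF2} value by $W_{r+1}$ (using \cref{cor:whitney_numbers}) yields the same total flow between consecutive ranks.

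The main obstacle is the case analysis in the second paragraph: one must be careful about which cover relations of $R_{n,l}$ are ``single'' (weight $1$) versus ``double'' (weight a proper fraction) — equivalently, which insertions can be performed in two ways — and about the two boundary blocks, since \cref{def:flow_construction} treats the single and double cases by different formulas. The computation goes through cleanly precisely because the double-edge formulas degenerate to the single-edge value at the first and last blocks, which is what makes the per-block contribution collapse to the closed form above.
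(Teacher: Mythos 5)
Your proof is correct and follows essentially the same route as the paper: \ref{NF1} is immediate with constant $n-l-r$, and \ref{NF2} is verified by fixing $y$, classifying the deleted element $b$ as interior, minimum, or maximum of its block (your weights $\frac{N-N_{j-1}-1}{N-1}$ and $\frac{N_j-1}{N-1}$ are exactly the paper's $\frac{s_i+\cdots+s_{l+1}-1}{l+r}$ and $\frac{s_1+\cdots+s_i-1}{l+r}$, with the boundary blocks handled by the same degeneration-to-$1$ observation), and summing per block to get $(s_i-1)\frac{l+r+1}{l+r}$ and then $\frac{r(l+r+1)}{l+r}$ overall. No substantive differences beyond notation and your extra consistency check against the Whitney numbers.
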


\begin{proof}
Fix $1 \le r \le n-l-1$. Let $x\in R_{n,l}$ have rank $r$, so that $x = (A_1, \dots, A_{l+1})$ with $|A_1| + \cdots + |A_{l+1}| = l+r$. Then by construction, we have
$$
\sum_{y,\, x\lessdot y}f(x\lessdot y) = n-l-r,
$$
which is positive and depends only on $r$. Therefore \ref{NF1} holds.

\begin{figure}[t]
\begin{center}
    \begin{tikzpicture}[xscale=2.4,yscale=1.2]
            \node (pm0) at (-2,0) {$(\{1\},\{2\})$};
            \node (p0m) at (0,0) {$(\{1\},\{3\})$};
            \node (0pm) at (2,0) {$(\{2\},\{3\})$};
            \node (pmm) at (-1,2) {$(\{1\},\{2,3\})$};      
            \node (ppm) at (1,2) {$(\{1,2\},\{3\})$};

            \draw (pm0.north) -- (pmm.south);
            \draw (p0m.north) -- (ppm.south);
            \draw (p0m.north) -- (pmm.south);
            \draw (0pm.north) -- (ppm.south);
            
            \node at (-1.575,1) [color=blue,fill=white] {$1$};
            \node at (-0.525,1) [color=blue,fill=white] {$\frac{1}{2}$};
            \node at (0.525,1) [color=blue,fill=white] {$\frac{1}{2}$};
            \node at (1.575,1) [color=blue,fill=white] {$1$};
    \end{tikzpicture}
\end{center}
    \caption{The normalized flow on $R_{3,1}$ defined in \cref{def:flow_construction}.}
    \label{fig:R31_flow}
\end{figure}
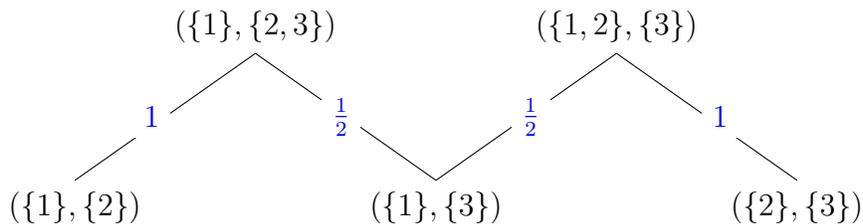
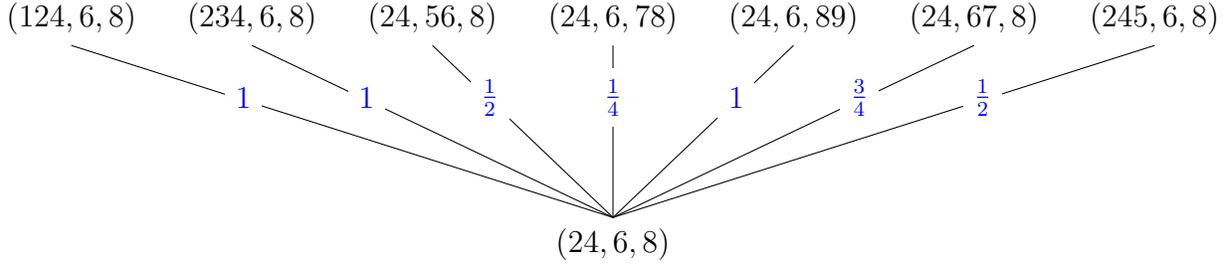
\begin{figure}[t]
\begin{center}
    \begin{tikzpicture}[xscale=1.2,yscale=1.5]
            \pgfmathsetmacro{\da}{0.35};
            
            \node (min) at (0,-2) {$(24,6,8)$};
            \node (x1) at (-6,0) {$(124,6,8)$};
            \node (x2) at (-4,0) {$(234,6,8)$};
            \node (x3) at (-2,0) {$(24,56,8)$};
            \node (x4) at (0,0) {$(24,6,78)$};
            \node (x5) at (2,0) {$(24,6,89)$};
            \node (x6) at (4,0) {$(24,67,8)$};
            \node (x7) at (6,0) {$(245,6,8)$};

            \draw (min.north) -- (x1.south);
            \draw (min.north) -- (x2.south);
            \draw (min.north) -- (x3.south);
            \draw (min.north) -- (x4.south);
            \draw (min.north) -- (x5.south);
            \draw (min.north) -- (x6.south);
            \draw (min.north) -- (x7.south);
            
            \node at (-6.3+6.3*\da,-2*\da) [color=blue,fill=white] {$1$};
            \node at (-4.2+4.2*\da,-2*\da) [color=blue,fill=white] {$1$};
            \node at (-2.1+2.1*\da,-2*\da) [color=blue,fill=white] {$\frac{1}{2}$};
            \node at (0,-2*\da) [color=blue,fill=white] {$\frac{1}{4}$};
            \node at (2.1-2.1*\da,-2*\da) [color=blue,fill=white] {$1$};
            \node at (4.2-4.2*\da,-2*\da) [color=blue,fill=white] {$\frac{3}{4}$};
            \node at (6.3-6.3*\da,-2*\da) [color=blue,fill=white] {$\frac{1}{2}$};
    \end{tikzpicture}
\end{center}
\caption{The element $(\{2,4\},\{6\},\{8\})\in R_{9,2}$, the elements covering it, and the values of the normalized flow defined in \cref{def:flow_construction}.}
    \label{fig:R92_flow}
\end{figure}

Now we prove \ref{NF2}. Let $y\in R_{n,l}$ have rank $r+1$, and write $y = (B_1, \dots, B_{l+1})$. Let $s_i := |B_i|$ for $1 \le i \le l+1$, so that $s_1 + \cdots + s_{l+1} = l+r+1$. Note that the elements $x\lessdot y$ are precisely those obtained from $y$ by selecting some block $i$ ($1 \le i \le l+1$) with $s_i \ge 2$, and removing some element $b$ of $B_i$. The value $f(x\lessdot y)$ is determined according to the following three cases:
\begin{enumerate}[label=(\roman*), leftmargin=*, itemsep=4pt]
\item if $b = \min(B_i)$, then $f(x\lessdot y) = \frac{s_i + \cdots + s_{l+1}-1}{l+r}$;
\item if $\min(B_i) < b < \max(B_i)$, then $f(x\lessdot y) = 1$; and
\item if $b = \max(B_i)$, then $f(x\lessdot y) = \frac{s_1 + \cdots + s_i-1}{l+r}$.
\end{enumerate}
The sum of the values $f(x\lessdot y)$, over all $b$ in all three cases above (with $y$ and $i$ fixed), equals
$$
\frac{s_i + \cdots + s_{l+1} - 1}{l+r} + (s_i - 2) + \frac{s_1 + \cdots + s_i - 1}{l+r} = \frac{l+r+1}{l+r}(s_i-1).
$$
Note that this formula also gives the desired sum (i.e.\ $0$) when $s_i = 1$. Therefore we obtain
$$
\sum_{x,\, x\lessdot y}f(x\lessdot y) = \sum_{i=1}^{l+1}\frac{l+r+1}{l+r}(s_i-1) = \frac{r(l+r+1)}{l+r},
$$
which is positive and depends only on $r$. This completes the proof.
\end{proof}

\section{The poset \texorpdfstring{$P_{n,l}$}{P(n,l)}}\label{sec:P}

\begin{figure}[t]
\begin{center}
    \begin{tikzpicture}[xscale=1.2]
            \node (p00) at (-3,-2) {$(+,0,0)$};
            \node (0p0) at (0,-2) {$(0,+,0)$};
            \node (00p) at (3,-2) {$(0,0,+)$};
            \node (pm0) at (-5,0) {$(+,-,0)$};
            \node (p0m) at (-3,0) {$(+,0,-)$};
            \node (pp0) at (-1,0) {$(+,+,0)$};
            \node (p0p) at (1,0) {$(+,0,+)$};
            \node (0pm) at (3,0) {$(0,+,-)$};
            \node (0pp) at (5,0) {$(0,+,+)$};
            \node (pmm) at (-3,2) {$(+,-,-)$};
            \node (ppm) at (0,2) {$(+,+,-)$};
            \node (ppp) at (3,2) {$(+,+,+)$};
     
            \draw (p00.north) -- (pm0.south);
            \draw (p00.north) -- (p0m.south);
            \draw (p00.north) -- (pp0.south);
            \draw (p00.north) -- (p0p.south);
            \draw (0p0.north) -- (pm0.south);
            \draw (0p0.north) -- (pp0.south);
            \draw (0p0.north) -- (0pm.south);
            \draw (0p0.north) -- (0pp.south);
            \draw (00p.north) -- (p0m.south);
            \draw (00p.north) -- (p0p.south);
            \draw (00p.north) -- (0pm.south);
            \draw (00p.north) -- (0pp.south);
            \draw (pm0.north) -- (pmm.south);
            \draw (p0m.north) -- (ppm.south);
            \draw (p0m.north) -- (pmm.south);
            \draw (pp0.north) -- (ppm.south);
            \draw (pp0.north) -- (ppp.south);
            \draw (p0p.north) -- (ppp.south);
            \draw (0pm.north) -- (ppm.south);
            \draw (0pp.north) -- (pmm.south);
            \draw (0pp.north) -- (ppp.south);
    \end{tikzpicture}
\end{center}
    \caption{The Hasse diagram of the poset $P_{3,1}$.}
    \label{fig:P31}
\end{figure}
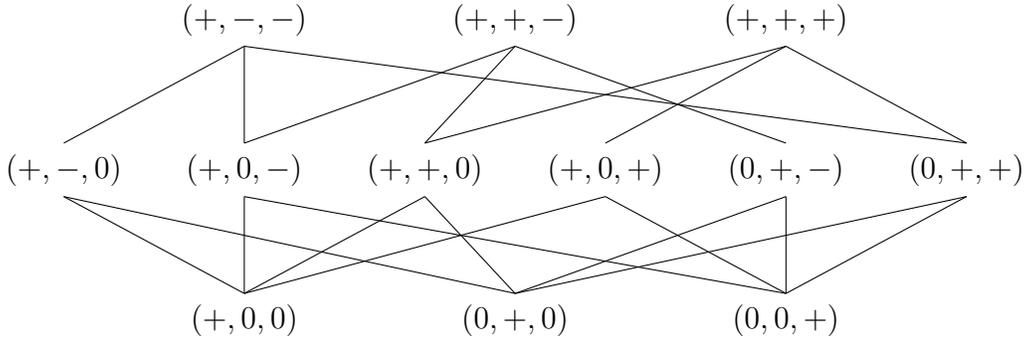
\noindent In this section we consider the poset $P_{n,l}$. Recall that $P_{n,l}$ is the poset of projective sign vectors of length $n$ with at most $l$ sign changes, under the relation \eqref{sign_vector_definition} (see \cref{fig:P31}).

It is natural to ask which properties of $R_{n,l}$ carry over to $P_{n,l}$. First we consider shellability. Since $P_{n,0} = R_{n,0}$, by e.g.\ \cref{thm:shelling_intro}, we have that $\hat{P}_{n,0}$ is EL-shellable. We can also verify directly that $\hat{P}_{2,1}$ is EL-shellable. We claim that in the remaining cases, $\hat{P}_{n,l}$ is not shellable. Indeed, if it were shellable, then the order complex of $P_{n,l}$ would be homeomorphic to a sphere or a closed ball of dimension $n-1$ \cite[Proposition 4.3]{bjorner84}. On the other hand, Machacek \cite{machacek} showed that the order complex of $P_{n,l}$ is homotopy equivalent to $\mathbb{RP}^l$, which is homeomorphic to the sphere $S^1$ when $l=1$, and is not homotopy equivalent to a sphere or a closed ball when $l \ge 2$.

We now show that $P_{n,l}$, like $R_{n,l}$, is rank-log-concave. We will use the following lemma, which appeared in talk slides of Mani \cite{mani09}. We give a proof following an argument of Semple and Welsh \cite[Example 2.2]{semple_welsh08}, who showed that a similar sequence is log-concave.
\begin{lemma}\label{lem:sum_log_concave}
Let $l\in\mathbb{N}$. Then the sequence $(s_1, s_2, \dots)$ is log-concave, where
\begin{gather*}
s_r := \sum_{i=0}^l\binom{r-1}{i} \quad \text{ for } r \ge 1.
\end{gather*}

\end{lemma}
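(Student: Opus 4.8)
The plan is to set $S_l(m) := \sum_{i=0}^{l}\binom{m}{i}$, so that $s_r = S_l(r-1)$ and log-concavity of $(s_r)_{r\ge 1}$ is exactly the assertion that $S_l(m)^2 \ge S_l(m-1)\,S_l(m+1)$ for every $m\ge 1$. (If $l=0$ then $s_r\equiv 1$ and there is nothing to prove, so we may assume $l\ge 1$.) The key first step is Pascal's rule in the form $S_l(m+1) = S_l(m) + S_{l-1}(m)$, which, applied also at $m-1$, yields
$$
S_l(m)^2 - S_l(m-1)\,S_l(m+1) = S_l(m)\,S_{l-1}(m-1) - S_l(m-1)\,S_{l-1}(m).
$$
Since $S_{l-1}(m)$ and $S_{l-1}(m-1)$ are positive, the right-hand side is nonnegative if and only if $S_l(m)/S_{l-1}(m) \ge S_l(m-1)/S_{l-1}(m-1)$, so the whole lemma reduces to showing that $m\mapsto S_l(m)/S_{l-1}(m)$ is non-decreasing on $\mathbb{N}$.

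For this reduced claim I would use $S_l(m) - S_{l-1}(m) = \binom{m}{l}$ to rewrite the ratio as $1 + \binom{m}{l}/S_{l-1}(m)$, so that it suffices to prove $m\mapsto \binom{m}{l}/S_{l-1}(m)$ is non-decreasing. For $0\le m\le l-1$ this quantity is $0$, and at $m=l$ it is positive, so the only content is monotonicity for $m\ge l$. There I would pass to the reciprocal:
$$
\frac{S_{l-1}(m)}{\binom{m}{l}} = \sum_{i=0}^{l-1}\frac{\binom{m}{i}}{\binom{m}{l}} = \sum_{i=0}^{l-1}\frac{l!}{i!}\prod_{j=i}^{l-1}\frac{1}{m-j}.
$$
For $m\ge l$ and $0\le j\le l-1$, each factor $1/(m-j)$ is positive and strictly decreasing in $m$, so every summand is a product of finitely many positive decreasing functions, hence decreasing; therefore the sum is decreasing in $m\ge l$, and $\binom{m}{l}/S_{l-1}(m)$ is increasing there. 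This establishes the reduced claim, and hence the lemma.

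I expect the only mild nuisance to be the bookkeeping at small $m$, where $\binom{m}{l}=0$ and one must not invert it; this is handled by observing that on $\{0,\dots,l-1\}$ the ratio $S_l(m)/S_{l-1}(m)$ is identically $1$ (both sums equal $2^m$ there) and then checking separately that this value jumps up at $m=l$, after which it keeps increasing. The one step requiring an idea rather than a computation is the algebraic identity in the first paragraph, which converts the log-concavity difference into a monotonicity statement for a quotient of partial binomial sums; everything after that is routine.
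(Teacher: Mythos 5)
Your proof is correct, and it finishes differently from the paper's. Both arguments start from the same Pascal-type recursion ($S_l(m+1)=S_l(m)+S_{l-1}(m)$ in your notation), but the paper (following Semple--Welsh) stays at a single rank: it expresses $s_{r+2}$ and $s_{r+3}$ in terms of $s_{r+1}$, $\binom{r}{l}$, $\binom{r}{l-1}$, reduces log-concavity to the single inequality $\binom{r}{l}\bigl(s_{r+1}-\binom{r}{l}\bigr)\le\binom{r}{l-1}s_{r+1}$, and proves that termwise from $\binom{r}{l}\binom{r}{i-1}\le\binom{r}{l-1}\binom{r}{i}$ for $0\le i\le l$. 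You instead convert the log-concavity difference into the cross-rank statement that $m\mapsto S_l(m)/S_{l-1}(m)$ is non-decreasing, and prove it by writing $S_{l-1}(m)/\binom{m}{l}=\sum_{i=0}^{l-1}\frac{l!}{i!}\prod_{j=i}^{l-1}\frac{1}{m-j}$ as a sum of positive decreasing functions for $m\ge l$, with the degenerate range $m\le l-1$ handled separately (your bookkeeping there is right: the ratio is identically $1$ below $m=l$ and then increases). The paper's route is a bit shorter and entirely finite — one termwise binomial inequality per rank — while yours isolates a slightly more structural fact (ratio-monotonicity of consecutive partial binomial sums, equivalently that $S_{l-1}(m)/\binom{m}{l}$ is decreasing), which is a standard and reusable mechanism for log-concavity and arguably explains \emph{why} the inequality holds. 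Both are complete; no gaps in your argument.
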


\begin{proof}
We must show that $s_{r+1}s_{r+3} \le s_{r+2}^2$ for $r \ge 0$. Using Pascal's identity $\binom{n}{i} = \binom{n-1}{i} + \binom{n-1}{i-1}$, we get
$$
s_{r+2} = 2s_{r+1} - \binom{r}{l} \quad \text{ and } \quad s_{r+3} = 4s_{r+1} - 3\binom{r}{l} - \binom{r}{l-1}.
$$
Therefore we can rewrite the inequality $s_{r+1}s_{r+3} \le s_{r+2}^2$ as
$$
\binom{r}{l}\bigg(s_{r+1} - \binom{r}{l}\bigg) \le \binom{r}{l-1}s_{r+1}.
$$
This follows by summing the inequalities
\begin{gather*}
\binom{r}{l}\binom{r}{i-1} \le \binom{r}{l-1}\binom{r}{i} \quad \text{ for } 0 \le i \le l.\qedhere
\end{gather*}

\end{proof}

\begin{theorem}\label{thm:P_unimodal}
Let $0 \le l < n$. The Whitney numbers of the second kind of $P_{n,l}$ (with ranks labeled from $1$ to $n$) are
$$
W_r = \binom{n}{r}\sum_{i = 0}^l \binom{r-1}{i} \quad \text{ for } 1 \le r \le n.
$$
The sequence $(W_1, \dots, W_n)$ is log-concave, i.e., $P_{n,l}$ is rank-log-concave.
\end{theorem}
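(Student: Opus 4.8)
The plan is to prove the formula for $W_r$ by a direct count, and then obtain log-concavity by exhibiting $(W_1,\dots,W_n)$ as an entrywise product of two log-concave sequences, one of which is already handled by \cref{lem:sum_log_concave}.

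For the formula, I would argue as follows. An element of $P_{n,l}$ of rank $r$ is a projective sign vector $v = (v_1,\dots,v_n)$ whose support $\{i\in[n] : v_i\neq 0\}$ has size $r$ and which has at most $l$ sign changes. First choose the support: any of the $\binom{n}{r}$ subsets of $[n]$ of size $r$ is allowed. Once the support is fixed, the remaining data is the sequence of signs of $v$ read off in order along the support, which is a vector in $\{+,-\}^r$ taken modulo the global flip $(\varepsilon_1,\dots,\varepsilon_r)\mapsto(-\varepsilon_1,\dots,-\varepsilon_r)$; moreover the number of sign changes of $v$ equals the number of indices $j\in[r-1]$ with $\varepsilon_j\neq\varepsilon_{j+1}$. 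The number of vectors in $\{+,-\}^r$ with exactly $j$ sign changes is $2\binom{r-1}{j}$ --- choose which of the $r-1$ consecutive gaps are sign changes, then one of the two initial signs --- and since $r\ge 1$ the global flip has no fixed points, so modulo it there are $\binom{r-1}{j}$ such patterns. Summing over $0\le j\le l$ yields $s_r = \sum_{i=0}^l\binom{r-1}{i}$ patterns, and therefore $W_r = \binom{n}{r}\,s_r$.

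For log-concavity, the previous step shows that $(W_1,\dots,W_n)$ is the entrywise product of $\big(\binom{n}{r}\big)_{r=1}^n$ and $(s_r)_{r=1}^n$. The first sequence is log-concave by the classical inequality $\binom{n}{r-1}\binom{n}{r+1}\le\binom{n}{r}^2$, the second is log-concave by \cref{lem:sum_log_concave}, and both consist of strictly positive entries on the range $1\le r\le n$. Since the entrywise product of log-concave sequences is log-concave (as noted in \cref{def:log_concave}), the sequence $(W_1,\dots,W_n)$ is log-concave; that is, $P_{n,l}$ is rank-log-concave.

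I do not expect a serious obstacle here, since \cref{lem:sum_log_concave} already supplies the only substantive log-concavity input and the product-of-log-concave-sequences fact is recorded in \cref{def:log_concave}. The point requiring the most care is the count of $W_r$ --- in particular, verifying that passing to the support preserves the number of sign changes, and that the projective identification contributes exactly a factor of $\tfrac12$ (which uses $r\ge 1$).
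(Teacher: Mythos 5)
Your proof is correct, and the log-concavity half coincides with the paper's argument exactly: both write $(W_1,\dots,W_n)$ as the entrywise product of $\bigl(\binom{n}{r}\bigr)_{r=1}^n$ and $\bigl(\sum_{i=0}^l\binom{r-1}{i}\bigr)_{r=1}^n$, invoke \cref{lem:sum_log_concave} for the second factor, and use the product-of-log-concave-sequences observation from \cref{def:log_concave}. Where you differ is in how the formula for $W_r$ is obtained. The paper decomposes $P_{n,l}$ as the disjoint union of the posets $R_{n,i}$ for $0\le i\le l$ (with rank $s$ of $R_{n,i}$ sitting at rank $s+i$ of $P_{n,l}$) and then plugs in the already-established Whitney numbers of $R_{n,i}$ from \cref{cor:whitney_numbers}, getting $W_r(P_{n,l})=\sum_i W_{r-i}(R_{n,i})=\sum_i\binom{r-1}{i}\binom{n}{r}$. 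You instead count from scratch: choose the support in $\binom{n}{r}$ ways, then count sign patterns on the support with exactly $j$ sign changes ($2\binom{r-1}{j}$, halved by the fixed-point-free global flip), and sum over $j\le l$; your care about zeros not affecting the sign-change count and about the factor $\tfrac12$ is exactly where the correctness lies, and the vanishing of $\binom{r-1}{j}$ for $j>r-1$ takes care of small $r$. The two counts are at bottom the same bijective data (support plus choice of sign-change gaps), so the difference is one of packaging: your version is self-contained and does not rely on \cref{cor:whitney_numbers} (hence not on the flag $f$-vector computation in \cref{prop:flagfvector}), while the paper's version is shorter given the machinery already developed and makes the relationship $P_{n,l}=\bigsqcup_{i\le l}R_{n,i}$ explicit, which is also conceptually useful elsewhere. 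Either route is acceptable.
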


\begin{proof}
The set of elements of $P_{n,l}$ is the disjoint union of $R_{n,i}$ for $0 \le i \le l$, where rank $s$ of $R_{n,i}$ appears in $P_{n,l}$ in rank $s+i$. Therefore by \cref{cor:whitney_numbers}, we have
$$
W_r(P_{n,l}) = \sum_{i=0}^{\min(l,r-1)}W_{r-i}(R_{n,i}) = \binom{n}{r}\sum_{i = 0}^l \binom{r-1}{i} \quad \text{ for } 1 \le r \le n.
$$
This proves the formula for $W_r$. Now note that $(W_1, \dots, W_n)$ is the product of the two sequences
$$
\bigg(\binom{n}{r}\bigg)_{r=1}^{n} \quad \text{ and } \quad \bigg(\sum_{i=0}^l\binom{r-1}{i}\bigg)_{r=1}^{n}.
$$
We can verify that the first sequence is log-concave, and the second sequence is log-concave by \cref{lem:sum_log_concave}. Therefore $(W_1, \dots, W_n)$ is log-concave.
\end{proof}

We conjecture that $P_{n,l}$, like $R_{n,l}$, is Sperner:
\begin{conjecture}\label{conj:sperner}
For $0 \leq l < n$, the poset $P_{n,l}$ is Sperner.
\end{conjecture}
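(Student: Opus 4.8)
The natural plan is to imitate the proof that $R_{n,l}$ is strongly Sperner, i.e.\ to construct a normalized flow on $P_{n,l}$: by \cref{thm:P_unimodal} the poset $P_{n,l}$ is rank-log-concave, hence rank-unimodal, so by Harper's theorem (recalled in \cref{sec:background_sperner}) such a flow would show that $P_{n,l}$ is strongly Sperner, and in particular Sperner. The first step is to describe the cover relations of $P_{n,l}$ explicitly. Using the decomposition of the underlying set of $P_{n,l}$ into $R_{n,0}\sqcup R_{n,1}\sqcup\cdots\sqcup R_{n,l}$ from the proof of \cref{thm:P_unimodal} (where rank $s$ of $R_{n,i}$ lies in rank $s+i$ of $P_{n,l}$), one checks that every cover relation $x\lessdot y$ of $P_{n,l}$ adds exactly one element to the support and is of one of two kinds: an \emph{intra-layer} cover, in which $x,y$ lie in the same $R_{n,i}$ and $x\lessdot y$ is a cover of $R_{n,i}$ of type $\alpha$ or $\beta$ as in \cref{lem:covers}; or an \emph{inter-layer} cover, in which $x\in R_{n,i}$, $y\in R_{n,i+1}$, and $y$ is obtained from $x$ by filling in a $0$-position with the sign that creates a new sign change. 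One would then try, level by level, to define a fractional weighting $f\ge 0$ on the bipartite graph between ranks $r$ and $r+1$ of $P_{n,l}$ that is regular on both sides, i.e.\ satisfies \ref{NF1} and \ref{NF2}; the natural first guess is to use a rescaling of the flow on $R_{n,i}$ from \cref{def:flow_construction} on the intra-layer edges, and then apportion the leftover flow among the inter-layer edges.

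As in the proof of \cref{thm:flow}, one would arrange for \ref{NF1} to hold by construction (the total out-flow from $x\in R_{n,i}$ of $P_{n,l}$-rank $r$ can be read off as the number of covers of $x$), leaving \ref{NF2} as the delicate point: the in-flow into each $y\in R_{n,i}$ of rank $r+1$ must be independent of $y$, and it receives contributions both from intra-layer covers inside $R_{n,i}$ and from inter-layer covers out of $R_{n,i-1}$. When $l\in\{0,1\}$ there are at most two layers present at any rank, and when $l=n-1$ the poset is trivial, which is exactly why \cref{prop:sperner_conj} handles those cases; for general $l$, the main obstacle is that it is unclear whether the inter-layer flow can be apportioned so as to balance the in-flow at \emph{every} element of \emph{every} rank simultaneously. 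In fact it is conceivable that no normalized flow exists---equivalently, that the normalized matching (LYM) property fails for $P_{n,l}$---which is why we conjecture only the Sperner property and not the strongly Sperner property.

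If no normalized flow can be found, the fallback is to prove the $1$-Sperner property directly, by showing that every antichain of $P_{n,l}$ has size at most $\max_r W_r$; by \cref{thm:P_unimodal} this maximum is attained at a single rank $r^*$ (or at two consecutive ranks). It would suffice---and this is weaker than a symmetric chain decomposition---to partition $P_{n,l}$ into chains each of which passes through rank $r^*$. One might hope to build such a decomposition by gluing chain decompositions of the individual layers $R_{n,i}$ (each strongly Sperner by \cref{thm:flow}) and rerouting chains along the inter-layer cover relations so that they all reach rank $r^*$; alternatively, one could try to verify the normalized matching inequality $\lvert\nabla F\rvert/W_{r+1}\ge\lvert F\rvert/W_r$ for up-sets $F$ directly from the combinatorics of sign vectors, or attempt a Dilworth/K\"{o}nig argument. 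In every version of the plan the crux is the same: understanding how the order of $P_{n,l}$ links adjacent layers $R_{n,i}$ and $R_{n,i+1}$---precisely the feature that makes $\hat{P}_{n,l}$ non-shellable and that is absent in $R_{n,l}$.
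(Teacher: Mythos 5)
The statement you were asked to address is a conjecture, and your text does not prove it: it is a survey of strategies, each of which ends in ``one would try,'' ``one might hope,'' or ``one could attempt,'' with no step carried out. This matches the actual status in the paper, where the general case is left open; the paper only constructs normalized flows for $P_{n,0}$, $P_{n,1}$, and $P_{n,n-1}$ (\cref{prop:sperner_conj}) and reports a computational check for $n\le 8$. Your layered picture ($P_{n,l}$ as $R_{n,0}\sqcup\cdots\sqcup R_{n,l}$ with intra-layer covers as in \cref{lem:covers} and inter-layer covers creating a new sign change) is correct and is essentially how the paper's proof of \cref{thm:P_unimodal} organizes the rank counts, and your proposed flow for the two-layer situation is in spirit what the paper does for $P_{n,1}$ (splitting weight $\frac12$ when a zero entry can be filled two ways). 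But for general $l$ the crux you yourself identify --- apportioning the inter-layer flow so that \ref{NF2} holds at every element of every rank simultaneously --- is exactly what is not done, here or in the paper, so no part of your text advances beyond the conjecture.

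Two smaller corrections. First, your parenthetical that ``when $l=n-1$ the poset is trivial'' is wrong: $P_{n,n-1}$ consists of \emph{all} projective sign vectors of length $n$ and is far from trivial (it is $R_{n,n-1}$, not $P_{n,n-1}$, that has a single element); the paper's argument for $P_{n,n-1}$ is that it is biregular, so the constant flow $1$ works. Second, be careful with the suggestion that a normalized flow ``would show $P_{n,l}$ is strongly Sperner, and in particular Sperner'' as if this settled the conjecture: it would, but the existence of such a flow is precisely the open question, and as you note it is conceivable that the LYM/normalized matching property fails while the (weaker) Sperner property still holds --- which is presumably why the paper conjectures only the latter.
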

We have verified that \cref{conj:sperner} holds for all $0 \le l < n \le 8$. We also show that it holds when $l$ equals $0$, $1$, or $n-1$:

\begin{proposition}\label{prop:sperner_conj}
The posets $P_{n,0}$, $P_{n,1}$, and $P_{n,n-1}$ admit a normalized flow, and hence are strongly Sperner.
\end{proposition}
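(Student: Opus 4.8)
The plan is to exhibit a normalized flow on each of $P_{n,0}$, $P_{n,1}$, and $P_{n,n-1}$ separately, since these three cases have rather different structure and I do not see a uniform construction. For $P_{n,0} = R_{n,0} = \bool{n}\setminus\{\hat{0}\}$, the Boolean algebra is well known to admit a normalized flow (for instance, put $f(x\lessdot y) := 1$ on every cover relation, which satisfies \ref{NF1} and \ref{NF2} by the symmetry of $\bool{n}$; cf.\ \cite[Example 4.6.2]{engel97}), so this case requires essentially nothing beyond citing \cref{thm:flow} with $l=0$. For $P_{n,n-1}$, note that the rank-$r$ elements are exactly the projective sign vectors of length $n$ with at most $n-1$ sign changes and $r$ nonzero entries; since $P_{n,n-1}$ contains $R_{n,i}$ for every $0 \le i \le n-1$, every sign vector with $r \ge 1$ nonzero coordinates appears in rank $r$. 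The plan is to lift the flow on $R_{n,l}$ from \cref{def:flow_construction} in a way that distributes mass uniformly: given $x$ of rank $r$ and $a\notin\operatorname{supp}(x)$, there are now typically three (not one or two) elements $y\gtrdot x$ obtained by inserting $a$ — one keeping the sign-change count and two that may raise it — and I would assign weights to these so that \ref{NF1} holds trivially (the $a$-block always contributes total weight $1$, so each rank-$r$ element has out-flow $n-r$) and then verify \ref{NF2} by a symmetry/counting argument analogous to the three-case computation in the proof of \cref{thm:flow}.

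For $P_{n,1}$, the poset consists of $R_{n,0}$ (sign vectors with no sign change) sitting in ranks $1,\dots,n$ together with $R_{n,1}$ (exactly one sign change) sitting in ranks $2,\dots,n$, glued along cover relations where a $0$ is filled in with the opposite sign from what is already present. I would define $f$ by taking the already-constructed flows on the two pieces $R_{n,0}$ and $R_{n,1}$ (the constant flow on the former, the flow of \cref{def:flow_construction} on the latter), suitably rescaled rank by rank, and then adjusting the weights on the "crossing" edges from $R_{n,0}$ into $R_{n,1}$ so that both balance conditions hold. Concretely, a rank-$r$ element of $R_{n,0}$ — a single interval of constant sign, say positions forming a set $A$ with $|A|=r$ — is covered by the $n-r$ elements obtained by adding a new coordinate; those giving the same sign stay in $R_{n,0}$, those giving the opposite sign land in $R_{n,1}$. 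The plan is to choose the split of each unit of out-flow between the $R_{n,0}$-child and the $R_{n,1}$-child as a ratio depending only on $r$ (and on whether the new coordinate is to the left or right of $A$), then to check \ref{NF2} at a rank-$(r+1)$ element by summing over its lower covers in the three regimes (both endpoints unchanged, left endpoint extended, right endpoint extended), mirroring the bookkeeping in \cref{thm:flow}.

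The main obstacle I expect is the verification of \ref{NF2} in the $P_{n,1}$ case: because the rank-$(r+1)$ layer mixes elements of $R_{n,0}$ and $R_{n,1}$, and a given $R_{n,1}$-element can be covered from below both by $R_{n,1}$-elements and by $R_{n,0}$-elements, the in-flow sum at such a vertex splits into more cases than in \cref{thm:flow}, and the rescaling constants must be chosen to make all of them collapse to one value. I anticipate that, as in the proof of \cref{thm:flow}, the correct weights will be forced by a telescoping identity once the right denominators (of the form $l+r$, here with $l=1$) are guessed, so the calculation should be routine albeit lengthy. Since the statement only asks for existence of a normalized flow, once \ref{NF1} and \ref{NF2} are checked in each of the three cases, strong Spernerity follows immediately from Harper's theorem, completing the proof.
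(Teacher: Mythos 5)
Your $P_{n,0}$ case is fine and is exactly what the paper does (the flow of \cref{thm:flow} with $l=0$ is the constant flow). The other two cases, however, are only plans, and both have concrete problems. For $P_{n,n-1}$ your cover count is wrong: filling a zero entry with $+$ or $-$ gives exactly \emph{two} covers per position, never three. In fact this case needs no construction at all: $P_{n,n-1}$ is biregular (every rank-$r$ element has up-degree $2(n-r)$ and every rank-$(r+1)$ element has down-degree $r+1$), so the constant labeling $f\equiv 1$ already satisfies \ref{NF1} and \ref{NF2}; this one-line observation is the paper's argument. Your per-position equal split would also work here, but as written you neither state the correct counts nor verify \ref{NF2}.

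The genuine gap is $P_{n,1}$, the substantive case, where you never actually produce the flow: the weights and the verification of \ref{NF2} are deferred to a calculation you assert ``should be routine.'' Moreover, your specific plan --- keep the flow of \cref{def:flow_construction} on the $R_{n,1}$-internal edges up to a rank-by-rank rescaling and only adjust the crossing weights --- cannot be completed, already for $n=4$. At the top rank of $P_{4,1}$, the element $(+,+,-,-)=(\{1,2\},\{3,4\})$ has all four lower covers inside $R_{4,1}$, with proposed weights summing to $d\bigl(1+\tfrac13+\tfrac13+1\bigr)=\tfrac{8d}{3}$; the elements $(+,-,-,-)$ and $(+,+,+,-)$ also receive internal in-flow $\tfrac{8d}{3}$ plus one crossing edge each, so \ref{NF2} forces both crossing weights to be $0$. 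But then every rank-$3$ element of $R_{4,0}$ sends its entire out-flow $V$ to $(+,+,+,+)$, which thus receives $4V$, while \ref{NF1} applied to $(+,+,0,-)$ gives $V=d$, and $4d\neq\tfrac{8d}{3}$. So the block-size-proportional weights of \cref{def:flow_construction} are the wrong building block for $P_{n,1}$. The paper's construction is different and simpler: for each zero position of $x$, split a unit of flow equally ($\tfrac12$ and $\tfrac12$) between the two sign fills when both stay in $P_{n,1}$, and assign weight $1$ when only one does. Then \ref{NF1} gives out-flow $n-r$, and \ref{NF2} follows from a short count: among the $r+1$ lower covers of a rank-$(r+1)$ element, exactly two of the corresponding edges carry weight $\tfrac12$ (those at the two entries where flipping the sign still leaves at most one sign change) and the remaining $r-1$ carry weight $1$, so the in-flow is $r$. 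Without this (or some other explicit) construction and verification, the $P_{n,1}$ statement is unproved.
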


\begin{proof}
For $P_{n,0} = R_{n,0}$, this follows from \cref{thm:flow}. For $P_{n,n-1}$, the constant function $1$ is a normalized flow. This is because $P_{n,n-1}$ is {\itshape biregular}, i.e., any two elements of $P_{n,n-1}$ of the same rank have the same up-degree and the same down-degree in the Hasse diagram.

Finally, we construct a normalized flow $f$ on $P_{n,1}$, similar to the one defined on $R_{n,1}$ in \cref{def:flow_construction}. Let $x\in P_{n,1}$, and let $a\in [n]$ such that $x_a = 0$. Consider the elements covering $x$ obtained by changing entry $a$ to either $+$ or $-$; there are exactly one or two of them. If there is one such element, say $y$, we set $f(x\lessdot y) := 1$. If there are two such elements, say $y_1$ and $y_2$, we set $f(x\lessdot y_i) := \frac{1}{2}$ for $i = 1,2$. Then if $x$ has rank $r$ (with $1 \le r \le n-1$), there are exactly $n-r$ possible values of $a$, so
$$
\sum_{y,\, x\lessdot y}f(x\lessdot y) = n-r.
$$
This is positive and depends only on $r$, which proves \ref{NF1}.

Now we verify that \ref{NF2} holds. Let $1 \le r \le n-1$, and let $y\in P_{n,l}$ have rank $r+1$. Given $a\in [n]$ such that $y_a \neq 0$, let $x\lessdot y$ be obtained from $y$ by changing entry $a$ to $0$, and let $z$ be the sign vector obtained from $y$ by flipping entry $a$ (from $+$ to $-$ or vice versa). If $z$ has at most one sign change, then $f(x\lessdot y) = \frac{1}{2}$, while if $z$ has at least two sign changes, then $f(x\lessdot y) = 1$. We observe that the first case occurs for exactly $2$ values of $a$, while the second case occurs for the remaining $r-1$ values of $a$. Therefore
$$
\sum_{x,\, x\lessdot y}f(x\lessdot y) = 2(\textstyle\frac{1}{2}) + (r-1) = r,
$$
which is positive and depends only on $r$. This proves \ref{NF2}.
\end{proof}

\bibliographystyle{alpha}
\bibliography{ref}

\end{document}